\definecolor{darkgreen}{rgb}{0,0.6,0}
\definecolor{darkblue}{rgb}{0,0,0.6}
\definecolor{orange}{RGB}{119,34,51}
\newtheorem{theorem}{Theorem}[section]
\numberwithin{equation}{section}
\newcounter{example}[section]
\numberwithin{example}{section}
\newtheorem{proposition}[theorem]{Proposition}
\newtheorem{lemma}[theorem]{Lemma}
\newtheorem{definition}[theorem]{Definition}
\newtheorem{exmp}[example]{Example}
\newtheorem{remark}[theorem]{Remark}
\newenvironment{example}{\begin{exmp}\rm}{\end{exmp}}
\def\cond{\, \big| \,}
\def\ex{\mathrm{e}}
\def\K{K}
\def\Cs{E}
\def\eps{\epsilon}
\def\B{\cc{B}}
\def\N{\mathcal{N}}
\def\F{\mathcal{F}}
\def\P{{\mathbb P}}
\def\E{{\mathbb E}}
\def\cond{\, \big| \,}
\def\ex{\mathrm{e}}
\def\K{K}
\def\Cs{E}
\def\eps{\epsilon}
\def\B{\cc{B}}
\def\N{\mathcal{N}}
\renewcommand{\(}{$\,}
\renewcommand{\)}{\,$}
\def\eqdef{\stackrel{\operatorname{def}}{=}}
\renewcommand{\bar}[1]{\overline{#1}}
\renewcommand{\hat}[1]{\widehat{#1}}
\renewcommand{\tilde}[1]{\widetilde{#1}}
\renewcommand{\Gamma}{\varGamma}
\renewcommand{\Pi}{\varPi}
\renewcommand{\Sigma}{\varSigma}
\renewcommand{\Delta}{\varDelta}
\renewcommand{\Lambda}{\varLambda}
\renewcommand{\Psi}{\varPsi}
\renewcommand{\Phi}{\varPhi}
\renewcommand{\Theta}{\varTheta}
\renewcommand{\Omega}{\varOmega}
\renewcommand{\Xi}{\varXi}
\renewcommand{\Upsilon}{\varUpsilon}
\def\Var{\operatorname{Var}}
\def\argmin{\operatornamewithlimits{argmin}}
\def\R{I\!\!R}
\def\kappa{\varkappa}
\def\B{\mathfrak{B}}
\def\C{\mathbf{C}}
\def\Ch{\hat{C}}
\def\Cs{\mathcal{C}}
\def\B{\mathfrak{B}}
\def\Q{\mathbb{Q}}
\def\R{\mathbb{R}}
\def\Kh{\mathcal{K}}
\def\K{\mathbf{K}}
\numberwithin{equation}{section}
\definecolor{RED}{rgb}{1,0,0}\definecolor{BLUE}{rgb}{0,0,1}
 \DeclareMathOperator{\Exp}{Exp}
 \DeclareMathOperator{\Poiss}{Poiss}
\renewcommand{\eps}{\varepsilon}
\renewcommand{\phi}{\varphi}
\renewcommand{\theta}{\vartheta}
\renewcommand{\subset}{\subseteq}
\renewcommand{\abs}[1]{\lvert #1 \rvert}
\renewcommand{\le}{\leqslant}
\renewcommand{\ge}{\geqslant}
\begin{document}



\title{Unbiased estimation of the volume\\ of a convex body}
\author{ \parbox{7cm}{\centering Nikolay Baldin\\\small Institute of Mathematics\\ Humboldt-Universit{\"a}t zu Berlin\\baldin.np@gmail.com} \hfill \parbox{7cm}{\centering Markus Rei\ss\footnote{We are grateful for helpful comments and questions by Mathias Reitzner and the anynonymous referee. Financial support by the DFG via Research Unit FOR1735 {\it Structural Inference in Statistics} is gratefully acknowledged.} \\\small Institute of Mathematics\\ Humboldt-Universit{\"a}t zu Berlin\\mreiss@math.hu-berlin.de}}



\maketitle

\begin{abstract}
Based on observations of points uniformly distributed over a convex set  in $\R^d$,
a new estimator for the volume of the convex set is proposed.
The estimator is minimax optimal and also efficient non-asymptotically:
it is nearly unbiased with minimal variance among
 all unbiased oracle-type estimators. Our approach is based on a Poisson point process model and
as an ingredient, we prove that the convex hull is a sufficient and complete statistic.
No hypotheses on the boundary of the convex set are imposed.  In a numerical study, we show that the estimator outperforms earlier estimators
for the volume. In addition, an adjusted set estimator for the convex body itself is proposed.
\end{abstract}

\noindent Keywords:
 volume estimation, convex hull, Poisson point process, UMVU, stopping set, exact oracle inequality, missing volume\\


\noindent MSC code: 60G55, 62G05, 62M30\\




\section{Introduction}

Driven by  applications in image analysis and signal processing, the estimation of the support of a density
attracts a lot of statistical activity.
In many cases it is  natural to assume a convex shape for the support set. First fundamental  results for convex support estimation have been achieved by \cite{korostelev1993minimax, korostelev1994asymptotic} who prove minimax-optimal rates of convergence in Hausdorff distance for a set estimator. In particular, \cite{korostelev1993minimax} prove that the convex
hull of the points \(\Ch_n \), which is a maximum likelihood estimator for the set \(C\),
is rate-optimal. Interestingly,  the volume \( |\Ch_n|
\) of the convex hull is not rate-optimal for estimation of the volume \(|C|\) of the convex set and
an alternative two-step estimator, optimal up to a logarithmic factor, was proposed. A fully rate-optimal estimator for the volume of a convex set with smooth boundary was then
constructed by \cite{gayraud1997} based on three-fold sample splitting. For various extensions and applications of convex support estimation, let us refer to \cite{mammen1995asymptotical, guntuboyina2012optimal,brunel:tel-01066977} and the literature cited there. Related ideas under H\"older and monotonicity constraints, respectively, have been adopted by \cite{ReissSelk14} for a one-sided regression model.

Our contribution is the construction of a very simple volume estimator which is not only rate-optimal over all convex sets without boundary restrictions, but even adaptive in the sense that it attains almost the parametric rate if the convex set is a polytope. Our approach is non-asymptotic and provides much more precise properties.
The analysis is based on a Poisson point process (PPP) observation model with intensity $\lambda>0$ on the convex set $C\subset\R^d$. We thus observe
\begin{EQA}[c]
X_1, ..., X_N \overset{i.i.d.}{\sim} U(C), \, \quad
N \sim \text{Poiss}(\lambda\abs{C}),
\end{EQA}
where \((X_n), N\) are independent, see Section~\ref{theoretical_digression_section} below for a concise introduction to the PPP model. Using Poissonisation and de-Poissonisation techniques, this model exhibits  asymptotic properties like the uniform model,
i.e. a sample of $n=\lambda\abs{C}$ uniformly on $C$ distributed random variables $X_1,\ldots,X_n$. The beautiful geometry of the PPP model, however, allows for much more concise ideas and proofs, see also \cite{meister2013asymptotic} for connections between PPP and regression models with irregular error distributions. From an applied perspective, PPP models  are often natural, e.g. for spatial count data of photons or other emissions.

For known intensity $\lambda$ of the PPP, we construct in Section~\ref{oracle_case_section}  an {\it oracle} estimator $\hat\theta_{oracle}$. Theorem~\ref{ThmOracle} shows that this estimator is UMVU (uniformly of minimum variance among unbiased estimators) and rate-optimal. To this end,  moment bounds from stochastic geometry for the missing volume of the convex hull,  obtained by \cite{barany1988convex} and  \cite{dwyer1988convex} are essential. Moreover, we derive results of independent interest:
the convex hull $\Ch=\text{conv}\{X_1,\ldots,X_N\}$ forms a sufficient and complete statistic (Proposition~\ref{PropSuffCompl}) and
 the Poisson point process, conditionally on \(  \Ch\), remains Poisson within its convex hull (Theorem~\ref{LemMeasCond}).

 For the more realistic case of unknown intensity $\lambda$, we analyse in Section~\ref{unknown_intensity_section} our final estimator
\begin{EQA}[c]
\hat{\theta} \eqdef \frac{N + 1}{N_\circ + 1} |\Ch| \,,
\end{EQA}
where $N_\circ$ denotes the number of observed points in the interior of $\Ch$.
We are able to prove a sharp oracle inequality, comparing the risk of this estimator to that of $\hat\theta_{oracle}$. Here, very recent and advanced results  by \cite{reitzner2003random,pardon2011,reitzner2015} on the variance of the number of points $N_{\partial}$ on the boundary of $\Ch$ and the missing volume $\abs{C\setminus\Ch}$  are of key importance. This fascinating interplay between stochastic geometry and statistics prevails throughout the work.

The lower bound showing that $\hat\theta$ is indeed minimax-optimal is proved in Theorem~\ref{lower_bound_theorem} by
adopting the proof of the lower bound in the uniform model by \cite{gayraud1997}. A small simulation study is presented in Section~\ref{numerical_study_section}. Moreover, we propose to enlarge the convex hull set by the factor $((N + 1)/(N_\circ + 1))^{1/d}$ and we study its error as an estimator of the set $C$ itself. The proof of Lemma~\ref{lemma_bias_plug_in} is deferred to the Appendix.


\section{Digression on Poisson Point Processes}
\label{theoretical_digression_section}

Most of the results and notation are adapted from \cite{karr1991point}.
We fix a compact convex set $\mathbf E$ in $\R^d$ with non-empty interior as a state space and denote by
\(\mathcal{E}\) its Borel \(\sigma\)-algebra.  We define
the  family of convex subsets \(\C = \{C \subseteq \mathbf{E}, \text{convex}, \text{closed} \}\)
(this implies that all sets in \(\C\) are compact) and the family of
compact subsets \(\K = \{K \subseteq \mathbf{E}, \text{compact} \}\). It is natural to equip the space \(\C\) (resp. \(\K\)) with the Hausdorff-metric $d_H$ and its
 Borel \(\sigma\)-algebra $\B_{\C}$ (resp. \(\B_{\K}\)). Then $(\C,d_H)$ is a compact and thus separable  space and the mapping $(x_1,\ldots,x_k)\mapsto \text{conv}\{x_1,\ldots,x_k\}$, which generates the convex hull of points $x_i\in{\mathbf E}$, is continuous from ${\mathbf E}^k$ to $(\C,d_H)$. 

On \((\mathbf{E}, \mathcal{E})\) we define the set of point measures
 \(\mathbf{M} = \{m \text{ measure on } \mathcal{E}\,:\,m(A) \in \mathbb{N},\,\, \forall A \in \mathcal{E}\}\) equipped
with the \(\sigma\)-algebra \(\mathcal{M} = \sigma(m\mapsto m(A), A \in\mathcal{E} )\).
Let \(C_c^{+}(\mathbf{E})\) be the collection of continuous functions \(\mathbf{E} \mapsto [0,\infty)\) with compact support.
A useful topology for \(\mathbf{M}\) is the \emph{vague topology} which makes \(\mathbf{M}\) a complete, separable metric space, cf.
Section 3.4 in
\cite{resnick2013extreme}.
A sequence of point measures \(m_n \in \mathbf{M}\) then converges vaguely to a limit \(m \in \mathbf{M}\) if and only if
\(m_n[f] \to m[f]\) for all \(f \in C_c^{+}(\mathbf{E}) \) where \(m[f] = \int_{\mathbf{E}} f d m \).
Let \(( \Omega, \F, \P)\) be an abstract probability space.
We call a measurable mapping
\( \N: \Omega \to \mathbf{M}\)
a Poisson point process (PPP) of intensity $\lambda>0$ on \(C\in\mathbf{C}\) if
\begin{itemize}
\item for any \(A \in  \mathcal{E}\), we have \(\N(A) \sim \text{Poiss}\bigl(\lambda\abs{A\cap C}\bigr)\), where $\abs{A\cap C}$ denotes the Lebesgue measure of $A\cap C$;
\item for all mutually disjoint sets \(A_1,..., A_n \in    \mathcal{E}\), the
random variables \(\N(A_1),...,\N(A_n) \) are independent.
\end{itemize}
For statistical inference, we assume the Poisson point process to be defined on a set of non zero  Lebesgue measure, i.e. \(|C| >0\).
A more constructive and intuitive representation of the PPP  \(\N\)
is $\N = \sum_{i = 1}^{N} \delta_{X_i}$
for \(N \sim \text{Poiss}(\lambda\abs{C})\) and  i.i.d. random variables \((X_i)\),
independent of \(N\) and distributed uniformly  \(\P(X_i \in A) = |A \cap C| / |C|\),
so that  \(\N(A)  =  \sum_{i = 1}^{N} {\bf 1}( X_i \in A )\)  for any \(A \in  \mathcal{E}\).

We  consider the convex hull of the PPP points  \(\hat C : \mathbf{M } \to \mathbf{C}\) defined by
 \(\hat C(\N) : =\text{conv}\{X_1, ..., X_N\}\), which by the above continuity property of the convex hull is a random element with values in the Polish space $(\C,d_H)$, see also \cite{davis1987convex} for a detailed study of the continuity of the convex hull. For a short notation, we shall further write \(\Ch\)
to denote the convex hull of the process \(\N\).
In the sequel, conditional expectations and probabilities with respect to $\hat C$ are thus well defined.
We can also evaluate the probability
\[
\P_C\bigl(\hat C \in  A\bigr)
	= \sum_{k = 0}^{\infty} \frac{\ex^{-\lambda\abs{C}} \lambda^k}{k!}
									\int_{C^k} {\bf 1}(\text{conv}\{x_1, ..., x_k\} \in A ) d(x_1,..., x_k)
\]
for \(A \in \B_{\C}\). Usually, we only write the subscript $C$ or sometimes $(C,\lambda)$ when different probability distributions are considered simultaneously.
The likelihood function \(\frac{d \P_{C,\lambda}}{d \P_{\mathbf{E},\lambda_0}}\) for \(C \in{\mathbf C}\) and $\lambda,\lambda_0>0$
is then given by
\begin{EQA}
\frac{d \P_{C,\lambda}}{d \P_{\mathbf{E},\lambda_0}}(X_1,...,X_N)
&=& \ex^{\lambda_0|{\mathbf E}|-\lambda|C|}(\lambda/\lambda_0)^N {\bf 1}(\forall\, i=1,...,N:\,X_i \in C)\\
&=& \ex^{\lambda_0|{\mathbf E}|-\lambda|C|}(\lambda/\lambda_0)^N {\bf 1}(\hat C\subseteq C)\,,
\label{fPPeb}
\end{EQA}
cf. Thm. 1.3 in \cite{Kutoyants1998}. For the last line, we have used that a point set is in $C$ if and only if its convex hull is contained in $C$.

For the set-indexed process $(\N(K),K\in\K)$ we define 
its natural set-indexed  filtration
\begin{EQA}[c]
\F_K \eqdef \sigma(\{\N(U); U \subseteq K,  U \in \K \})
\end{EQA}
for any \( K \in \K\).
The filtration \((\F_K, K \in \K)\) possesses the following properties:
\begin{itemize}
\item monotonicity: \(\F_{K_1} \subseteq \F_{K_2} \) for any \(K_1, K_2 \in \K\) with \(K_1 \subseteq K_2 \),
\item continuity from above: \(\F_K = \cap_{i=1}^{\infty}\F_{K_i}\) if \(K_i \downarrow K \);
\end{itemize}
cf. \cite{zuyev1999stopping}. By construction, the restriction \(\N_K = \N(\cdotp \cap K)\) of the point process \(\N\) onto
\(K \in \mathbf{K}\) is \(\F_K\)-measurable (in fact, \(\F_K = \sigma(\{\N_K(U); U \in \mathbf{K}\})\)). In addition, it can be easily seen that \(\N_K\) is a Poisson point process in \(\mathbf{M}\), cf. the
Restriction Theorem in \cite{kingman1992poisson}, and thus \(\Ch(\N_K) = \text{conv}(\{X_1,\ldots,X_N\}\cap K)\) is by the above arguments \(\F_K\)-measurable.

A random compact set \(\Kh\) is a measurable mapping \(\Kh:  (\mathbf{M},\mathcal{M}) \to (\K,\B_\K)\).
Note that \cite{zuyev1999stopping} defines a random compact set as a measurable mapping from \( (\mathbf{M},\mathcal{M})\) to
\((\K,\sigma_{\K})\) where \(\sigma_{\K}\) is the so-called \emph{Effros} \(\sigma\)-algebra generated by the sets \(\{ F \in \K: F \cap K \neq \emptyset \}\), \(K \in \K\). Thanks to
Thm. 2.7  in \cite{molchanov2006}, the Effros \(\sigma\)-algebra  \(\sigma_{\K}\)  induced on the family of compact sets \(\K\) coincides with
the Borel \(\sigma\)-algebra \(\B_\K\), and we prefer to stick to the first definition of a random compact set for convenience.
Next, we recall the definition of stopping sets from \cite{rozanov1982markov} in complete analogy with stopping times.
\begin{definition}
A random compact set \(\Kh\) is called an \(\F_K\)-stopping set if \(\{\Kh \subseteq K\} \in \F_K\)
for all \(K \in \K\). The sigma-algebra of \(\Kh\)-history is defined as
$\F_{\Kh} = \{A \in \mathfrak{F}: A \cap \{ \Kh \subseteq K \} \in \F_K\,\,\, \forall K \in \K \}$, where
\(\mathfrak{F} =   \sigma(\F_K; K \in \K)\).
\end{definition}

For a set $A\subset {\mathbf E}$ let $A^c$ denote its complement.

\begin{lemma}
The set $\hat\Kh\eqdef\overline{\Ch^c}$, the closure of the complement of the convex hull, is an $(\F_K)$-stopping set.
\end{lemma}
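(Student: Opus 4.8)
The plan is to check the defining property of a stopping set directly: $\{\hat\Kh\subseteq K\}\in\F_K$ for every $K\in\K$, by rewriting this event so that it depends only on the points of $\N$ lying in $K$, whose law is governed by the $\F_K$-measurable restricted process $\N_K$.

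First I would note that, since $\hat\Kh=\overline{\mathbf E\setminus\Ch}\supseteq\mathbf E\setminus\Ch$ and $K$ is closed,
\[
\{\hat\Kh\subseteq K\}=\{\mathbf E\setminus K\subseteq\Ch\}\,,
\]
because $\hat\Kh\subseteq K$ forces $\mathbf E\setminus\Ch\subseteq K$, i.e.\ $\mathbf E\setminus K\subseteq\Ch$ (using $\Ch\subseteq\mathbf E$), while conversely $\mathbf E\setminus K\subseteq\Ch$ gives $\mathbf E\setminus\Ch\subseteq K$ and hence, after taking closures, $\hat\Kh\subseteq K$. Next I would replace $\Ch$ by $\Ch_K\eqdef\Ch(\N_K)=\text{conv}(\{X_i:X_i\in K\})$, which the text already identifies as an $\F_K$-measurable random element of the Polish space $(\C,d_H)$. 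Setting $V\eqdef\text{conv}(\overline{\mathbf E\setminus K})$, a fixed compact convex set, and using that $\Ch$ and $\Ch_K$ are closed and convex, the event above equals $\{V\subseteq\Ch\}$ and $\{\mathbf E\setminus K\subseteq\Ch_K\}=\{V\subseteq\Ch_K\}$; so it suffices to prove $\{V\subseteq\Ch\}=\{V\subseteq\Ch_K\}$. The inclusion ``$\supseteq$'' follows from $\Ch_K\subseteq\Ch$; for ``$\subseteq$'' I would compare support functions $h_A(u)=\sup_{x\in A}\langle u,x\rangle$. Any data point $X_j\notin K$ lies in the relatively open set $\mathbf E\setminus K$, so — as long as $X_j\notin\partial\mathbf E$ — a full $\R^d$-ball around $X_j$ is contained in $\mathbf E\setminus K\subseteq V$, whence $X_j\in\operatorname{int}V$ and $\langle u,X_j\rangle<h_V(u)$ for every direction $u$. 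Therefore, on $\{V\subseteq\Ch\}$, for each $u$ the maximiser of $\langle u,\cdot\rangle$ over $X_1,\dots,X_N$ attains a value at least $h_V(u)$ and so cannot be a point outside $K$; hence $h_{\Ch_K}(u)\ge h_V(u)$ for all $u$, i.e.\ $V\subseteq\Ch_K$.

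It then remains to observe that $\{\mathbf E\setminus K\subseteq\Ch_K\}=\{V\subseteq\Ch_K\}\in\F_K$: the map $\N\mapsto\Ch_K$ is $\F_K$-measurable into $(\C,d_H)$, and $\{D\in\C:\overline{\mathbf E\setminus K}\subseteq D\}$ is $d_H$-closed (if $D_n\to D$ with $\overline{\mathbf E\setminus K}\subseteq D_n$, then $\operatorname{dist}(x,D)\le d_H(D_n,D)\to0$ for every $x\in\overline{\mathbf E\setminus K}$), hence Borel. Combining the three reductions gives $\{\hat\Kh\subseteq K\}\in\F_K$.

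I expect the main obstacle to be the support-function step, and within it the points potentially sitting on the Lebesgue-null boundary $\partial\mathbf E$, where the ``redundancy'' of points outside $K$ genuinely breaks down, so that $\{V\subseteq\Ch\}$ and $\{V\subseteq\Ch_K\}$ may differ on an event of probability zero. Since the Poisson process almost surely places no point on $\partial\mathbf E$, the two events agree up to a $\P$-null set, which is enough provided $(\F_K)$ is understood to be $\P$-complete — the standard convention in this framework — or provided one works on $\operatorname{int}\mathbf E$. The remaining ingredients are elementary convex geometry together with the $\F_K$-measurability of the restricted convex hull already recorded above.
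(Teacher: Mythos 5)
Your proof is correct and follows essentially the same route as the paper's: both reduce $\{\hat\Kh\subseteq K\}$ to the event that $\mathbf{E}\setminus K$ is contained in $\text{conv}(\{X_1,\ldots,X_N\}\cap K)$, which is $\F_K$-measurable. The only substantive difference is how that set identity is verified --- the paper argues that $\hat\Kh\subseteq K$ forces $\partial\Ch\subseteq K$, hence that all vertices of $\Ch$ lie in $K$ and $\Ch=\text{conv}(\{X_i\}\cap K)$, whereas you compare support functions of $V=\text{conv}(\overline{\mathbf{E}\setminus K})$, $\Ch$ and the hull of the points in $K$. Your version is a little longer but has the merit of isolating exactly where the identity can fail, namely for configurations with process points on $\partial\mathbf{E}\setminus K$: for instance, with $\mathbf{E}=[0,1]^2$, $K=[0,\tfrac12]\times[0,1]$ and points at the four corners one has $\hat\Kh=\emptyset\subseteq K$ while $\text{conv}(\{X_i\}\cap K)$ is only a segment, so the claimed equivalence fails there; the paper's vertex argument tacitly assumes the vertices lie on the relative boundary $\partial\Ch$, which breaks down in the same configurations. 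Since this is a $\P$-null event, both arguments go through once $(\F_K)$ is read as $\P$-completed --- precisely the caveat you flag, and consistent with the completion the paper itself invokes for $\sigma(\Ch)$ in Theorem~3.1.
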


\begin{proof}
We claim  $\hat\Kh\subseteq K$ if and only if $K^c\subseteq \text{conv}(\{X_1,\ldots,X_N\}\cap K)$. Indeed, if $\hat\Kh\subseteq K$ holds, then the boundary $\partial\Ch=\partial\hat\Kh$ is in $K$ which implies $\text{conv}(\{X_1,\ldots,X_N\}\cap K)=\Ch$. Consequently, $K^c\subseteq \hat\Kh^c\subseteq \Ch=\text{conv}(\{X_1,\ldots,X_N\}\cap K)$ holds. Conversely,  $K^c\subseteq \text{conv}(\{X_1,\ldots,X_N\}\cap K)$ implies immediately $K^c\subseteq \Ch$ and thus
$\Ch^c\subseteq K$. Since $K$ is closed, we obtain $\hat\Kh\subseteq K$.

Since $\{X_1,\ldots,X_N\}\cap K$ are the realisations of the point process inside $K$ and the convex hull is measurable, we conclude $\{K^c\subseteq \text{conv}(\{X_1,\ldots,X_N\}\cap K)\}\in\F_K$.
\end{proof}

We shall further use the following short notation:  \(N =\N(C)\) denotes the total number of points, \(N_\circ = \N({\Ch}^\circ)\) the number of points in the interior
of the convex hull \(\Ch\) and \( N_{\partial} = \N(\partial\Ch)=\N(\partial\hat\Kh) \) the number of points on the boundary of the convex hull.
For asymptotic bounds we write \(f(x) = O(g(x))\) or \(f(x) \lesssim g(x)\) if \(f(x)\) is bounded by a constant multiple of \(g(x)\)  and
 \(f(x) \thicksim g(x)\) if \(f(x) \lesssim g(x)\)  as well as \(g(x) \lesssim f(x)\).


\section{Oracle case: intensity \(\lambda\) is known}
\label{oracle_case_section}

For a PPP on $C\in{\mathbf C}$ with intensity $\lambda>0$, we know  $N\sim\Poiss(\lambda\abs{C})$.
In the oracle case, when the intensity \(\lambda \) is known,  $N/\lambda$ estimates $\abs{C}$ without bias and yields the classical parametric rate in $\lambda$:
\begin{EQA}[c]
\E [(N/\lambda - |C|)^2] = \lambda^{-2}\Var(N)  = \frac{|C|}{\lambda}\,.
\end{EQA}
Another natural idea might be to use the plug-in estimator \( |\Ch|\) whose error is given by the missing volume and satisfies
\begin{EQA}[c]
\E [(\abs{\Ch} - |C|)^2] = \E[|C \setminus \Ch|^2] =   O(|C|^{2(d-1)/(d+1)}\lambda^{-4/(d+1)})\,,
\end{EQA}
where the bound is obtained  similarly to \eqref{thm_upper_bound} and \eqref{EcbCh} below. This means that its error is of smaller order than $\lambda^{-1}$ for $d\le 2$, but larger for $d\ge 4$. For any $d\ge 2$, however, both convergence rates are worse than the minimax-optimal rate $\lambda^{-(d+3)/(d+1)}$, established below.

The way to improve these estimators is to observe that by the likelihood representation \eqref{fPPeb} for $\lambda=\lambda_0$ and the Neyman factorisation criterion the convex hull is a sufficient statistic.
Consequently, by the Rao-Blackwell theorem, the
 conditional expectation of \( N / \lambda\) given the convex hull \(\Ch \)
is an estimator with smaller mean squared error (MSE).

The number of points \(N\) can be split into the number \(N_{\partial}\) of points on the boundary
and the number  \(N_\circ \) of points in the interior of the convex hull.
The following theorem is essential in deriving the oracle estimator. Although the statement
of the theorem is quite intuitive and already used in \cite{privault2012invariance}, the proof turns out to be nontrivial and is deferred to the Appendix.

\begin{theorem}
\label{LemMeasCond}
The number \(N_{\partial}\) of points on the boundary of the convex hull  is measurable with respect to the sigma-algebra of
\(\hat\Kh\)-history \(\F_{\hat\Kh}\). The number of points in the interior of the convex hull \(N_\circ\) is, conditionally on \(  \F_{\hat\Kh}\), Poisson-distributed:
\begin{equation}\label{NccCs}
N_\circ \cond \F_{\hat\Kh} \sim \text{Poiss}(\lambda_\circ)\text{ with }\lambda_\circ \eqdef \lambda | \Ch |.
\end{equation}
In addition, we have \(\F_{\hat\Kh} = \sigma(\Ch)\), where
the latter is the sigma-algebra \( \sigma(\{\Ch \subseteq B, B \in \C\})\) completed with the null sets in  \(\mathfrak{F}\).
\end{theorem}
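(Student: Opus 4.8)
The plan is to derive all three assertions from the strong Markov property of the Poisson point process at the stopping set $\hat\Kh=\overline{\Ch^{c}}$ (shown to be a stopping set in the Lemma above), combined with the description established in that proof: $\{\hat\Kh\subseteq K\}=\{K^{c}\subseteq\Ch(\N_{K})\}$, and on this event $\Ch(\N_{K})=\Ch$ and $\partial\Ch\subseteq K$. I shall use repeatedly that $\hat\Kh^{c}=\Ch^{\circ}$ (because $\hat\Kh=\overline{\Ch^{c}}$) and that $|\Ch^{\circ}|=|\Ch|$, the boundary of a convex set being Lebesgue-null.

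First, $N_{\partial}$ is $\F_{\hat\Kh}$-measurable: on the event $\{\hat\Kh\subseteq K\}$ both $\Ch=\Ch(\N_{K})$ and $\partial\Ch=\partial\Ch(\N_{K})\subseteq K$ depend on $\N_{K}$ alone, hence $N_{\partial}=\N_{K}(\partial\Ch(\N_{K}))$ is $\F_{K}$-measurable, so $\{N_{\partial}=j\}\cap\{\hat\Kh\subseteq K\}\in\F_{K}$ for all $j$ and $K$, which is the defining property of $\F_{\hat\Kh}$. The same reasoning shows that $\Ch$, and the restriction $\N|_{\partial\Ch}$, are $\F_{\hat\Kh}$-measurable; after completion this gives $\sigma(\Ch)\subseteq\F_{\hat\Kh}$, one half of the last claim. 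For the conditional law I invoke the strong Markov property for Poisson processes at stopping sets \cite{zuyev1999stopping}: conditionally on $\F_{\hat\Kh}$, the restriction $\N|_{\Ch^{\circ}}$ of $\N$ to the open set $\hat\Kh^{c}=\Ch^{\circ}$ is again a Poisson point process, with intensity $\lambda$ times Lebesgue measure on $\Ch^{\circ}$. Since $N_{\circ}=\N(\Ch^{\circ})$ and $|\Ch^{\circ}|=|\Ch|$, this is exactly \eqref{NccCs}; in the degenerate case of a lower-dimensional $\Ch$ one has $|\Ch|=0$ and $N_{\circ}=0$, consistent with $\text{Poiss}(0)$.

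It remains to prove $\F_{\hat\Kh}\subseteq\sigma(\Ch)$. I would first note that, almost surely, the points of $\N$ are in general position (no $d+1$ of them affinely dependent): conditionally on $N=n$ the $X_{i}$ are i.i.d.\ with a density, and a union bound over the finitely many $(d+1)$-tuples gives probability one. General position forces every point of $\N$ lying on $\partial\Ch$ to be a vertex of $\Ch$, so $\N|_{\partial\Ch}$ equals the sum of unit masses at the vertices of $\Ch$, a deterministic function of $\Ch$; conversely $\Ch=\text{conv}(\supp(\N|_{\partial\Ch}))$, so $\sigma(\N|_{\partial\Ch})=\sigma(\Ch)$ up to null sets. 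Together with $\N=\N|_{\partial\Ch}+\N|_{\Ch^{\circ}}$ this yields $\mathfrak{F}=\sigma(\N)=\sigma(\Ch)\vee\sigma(\N|_{\Ch^{\circ}})$ up to null sets. By the strong Markov property the conditional law of $\N|_{\Ch^{\circ}}$ given $\F_{\hat\Kh}$ depends on $\F_{\hat\Kh}$ only through $\Ch$, hence $\N|_{\Ch^{\circ}}$ is conditionally independent of $\F_{\hat\Kh}$ given $\sigma(\Ch)$. Now if $A\in\F_{\hat\Kh}$, then $\Ind_{A}$ is $\sigma(\Ch)\vee\sigma(\N|_{\Ch^{\circ}})$-measurable up to null sets and, being $\F_{\hat\Kh}$-measurable, conditionally independent of $\N|_{\Ch^{\circ}}$ given $\sigma(\Ch)$; a random variable that is simultaneously measurable with respect to $\sigma(\Ch)\vee\sigma(Y)$ and conditionally independent of $Y$ given $\sigma(\Ch)$ is necessarily $\sigma(\Ch)$-measurable, so $A\in\sigma(\Ch)$ up to null sets.

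The main obstacle is the strong Markov property itself — that, conditionally on the $\hat\Kh$-history, the process strictly inside the convex hull is again Poisson with the inherited intensity. The transparent route (condition on $N=n$, so the $X_{i}$ are i.i.d.\ uniform, and integrate out which of them turn out to be vertices) is correct but the bookkeeping over the random vertex set is delicate, which is presumably why the proof is deferred to the Appendix; an alternative is to adapt Zuyev's abstract strong Markov theorem for stopping sets to the present geometry. Secondary technical points — the measurability of $C'\mapsto\partial C'$ and $C'\mapsto(C')^{\circ}$ on $(\C,d_{H})$, and the identification of the completed $\sigma(\{\Ch\subseteq B,\,B\in\C\})$ with the Borel $\sigma$-algebra of $\Ch$ — are standard in random-set theory, and the measure-theoretic manipulations in the previous paragraph are routine once the conditional independence is in hand.
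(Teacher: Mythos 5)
Your overall architecture matches the paper's --- reduce everything to a strong Markov property at the stopping set $\hat\Kh=\overline{\Ch^{c}}$ --- but the one step you leave unproved is precisely the step the paper's appendix is devoted to. You ``invoke the strong Markov property for Poisson processes at stopping sets'' to conclude that, conditionally on $\F_{\hat\Kh}$, the process on $\hat\Kh^{c}=\Ch^{\circ}$ is again Poisson with intensity $\lambda$, and you later concede that this is ``the main obstacle''. The difficulty is that the general results you cite (Zuyev, and Rozanov's theorem for random fields) give only the independence of the increments $\N(K\cup\hat\Kh)-\N(\hat\Kh)$ from $\F_{\hat\Kh}$; they do \emph{not} identify the conditional distribution --- the paper says exactly this in the remark following Proposition~\ref{psm}. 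The paper then supplies the missing argument by a different device: the compensated process $M_{K}=\N(K)-\lambda|K|$ is a set-indexed martingale, optional sampling at the stopping set shows that $\tilde M_{K}=\N(K\setminus\hat\Kh)-\lambda|K\setminus\hat\Kh|$ is a martingale for the enlarged filtration $\F_{K\cup\hat\Kh}$, and the Watanabe-type martingale characterisation of set-indexed Poisson processes (Ivanoff) identifies the conditional law as Poisson with mean measure $\lambda|\cdot\cap\hat\Kh^{c}|$. Your alternative sketch (condition on $N=n$ and integrate out the random vertex set) is acknowledged to be ``delicate'' and is not carried out, so as it stands \eqref{NccCs} rests on an unproved assertion.

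The parts you do carry out are sound and in places take a genuinely different route. The direct verification that $\{N_{\partial}=j\}\cap\{\hat\Kh\subseteq K\}\in\F_{K}$ replaces the paper's progressive-measurability lemma for $\N(\hat\Kh)$; and your proof of $\F_{\hat\Kh}\subseteq\sigma(\Ch)$ via the a.s.\ decomposition $\mathfrak{F}=\sigma(\Ch)\vee\sigma(\N|_{\Ch^{\circ}})$ (general position makes $\N|_{\partial\Ch}$ a function of $\Ch$) together with the conditional independence $\N|_{\Ch^{\circ}}\perp\F_{\hat\Kh}$ given $\sigma(\Ch)$ is a clean alternative to the paper's appeal to Shiryaev's theorem that the stopped process generates $\F_{\hat\Kh}$; the measure-theoretic step is correct, since $Z=\E[Z\cond\sigma(\Ch)\vee\sigma(Y)]=\E[Z\cond\sigma(\Ch)]$ a.s.\ for such $Z$. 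Note, however, that this argument also consumes the full distributional form of the strong Markov property, so none of the three assertions of the theorem is actually established until Proposition~\ref{psm} (or an equivalent) is proved.
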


With Theorem~\ref{LemMeasCond} at hand, we obtain the \emph{oracle} estimator
\begin{EQA}[c]
\label{sbvcbxcvb}
\hat\theta_{oracle} \eqdef \E\Bigl[\frac{N }{\lambda} \cond \Ch \Bigr] =   \E\Bigl[\frac{N_\circ + N_{\partial}}{\lambda} \cond \Ch  \Bigr]
= | \Ch |+\frac{ N_{\partial}}{\lambda} \,,
\end{EQA}
where conditioning on \(\Ch\) means conditioning on \(\sigma(\Ch)=\F_{\hat\Kh}\).

\begin{theorem}\label{ThmOracle}
For known intensity $\lambda > 0$, the oracle estimator \(\hat\theta_{oracle}\) is unbiased and of minimal variance among all unbiased estimators (UMVU). It satisfies
\[ \Var(\hat\theta_{oracle})= \frac{1}{\lambda} \E[ |C \setminus \Ch|]\,.\]
Its worst case mean squared error over $\C$ decays as $\lambda\uparrow \infty$  like $\lambda^{-(d+3)/(d+1)}$ in dimension $d$:
\begin{EQA}[c]
\limsup_{\lambda\to \infty} \lambda^{(d+3)/(d+1)}\sup_{C \in \C, |C| >0} \Big\{\abs{C}^{-(d-1)/(d+1)} \E\big[(\hat\theta_{oracle}-\abs{C})^2\big]\Big\}  < \infty\,.
\end{EQA}
\end{theorem}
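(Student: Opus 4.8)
The plan is to treat the three assertions in turn, using Theorem~\ref{LemMeasCond} and the completeness of the convex hull as the main tools.

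\textbf{Unbiasedness and the UMVU property.} Since $\E[N/\lambda]=\lambda^{-1}\E[N]=|C|$ for a PPP of intensity $\lambda$ on $C$, the tower property gives $\E[\hat\theta_{oracle}]=\E\big[\E[N/\lambda\mid\Ch]\big]=|C|$, so $\hat\theta_{oracle}$ is unbiased. By the likelihood representation \eqref{fPPeb} (with $\lambda=\lambda_0$) together with the Neyman factorisation criterion, $\Ch$ is sufficient, and it is complete by Proposition~\ref{PropSuffCompl}. By Theorem~\ref{LemMeasCond} the variable $N_\partial$ is $\sigma(\Ch)$-measurable, hence so is $\hat\theta_{oracle}=|\Ch|+N_\partial/\lambda$. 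The Lehmann--Scheff\'{e} argument now applies: given any unbiased estimator $T$ of $|C|$, the Rao--Blackwellised estimator $\E[T\mid\Ch]$ is again unbiased with variance no larger than that of $T$, and by completeness it coincides almost surely with $\hat\theta_{oracle}$; therefore $\hat\theta_{oracle}$ is UMVU.

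\textbf{The variance identity.} I would apply the law of total variance to $N/\lambda$ with respect to $\sigma(\Ch)$:
\[
\Var(N/\lambda)=\Var\big(\hat\theta_{oracle}\big)+\E\big[\Var(N/\lambda\mid\Ch)\big].
\]
Here $\Var(N/\lambda)=|C|/\lambda$ as noted before the theorem. For the conditional term, write $N=N_\circ+N_\partial$; since $N_\partial$ is $\sigma(\Ch)$-measurable, $\Var(N/\lambda\mid\Ch)=\lambda^{-2}\Var(N_\circ\mid\Ch)$, and by \eqref{NccCs} the conditional law of $N_\circ$ is $\Poiss(\lambda|\Ch|)$, so $\Var(N_\circ\mid\Ch)=\lambda|\Ch|$ and hence $\Var(N/\lambda\mid\Ch)=|\Ch|/\lambda$. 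Taking expectations and rearranging yields $\Var(\hat\theta_{oracle})=\lambda^{-1}\big(|C|-\E[|\Ch|]\big)=\lambda^{-1}\E[|C\setminus\Ch|]$.

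\textbf{The minimax rate.} Because $\hat\theta_{oracle}$ is unbiased, its mean squared error equals its variance, so $\E[(\hat\theta_{oracle}-|C|)^2]=\lambda^{-1}\E[|C\setminus\Ch|]$ and it remains to bound the expected missing volume uniformly over $\C$. Conditioning on $N$, the quantity $\E[|C\setminus\Ch|\mid N=k]$ is the classical fixed-sample expected missing volume for $k$ i.i.d.\ uniform points; invoking the moment bounds of \cite{barany1988convex} and \cite{dwyer1988convex}, which hold for \emph{every} convex body with no boundary assumption (economic cap covering), together with the concentration of the Poisson variable $N$ around $\lambda|C|$, I expect to obtain $\E[|C\setminus\Ch|]\le c_d\,|C|(\lambda|C|)^{-2/(d+1)}=c_d\,|C|^{(d-1)/(d+1)}\lambda^{-2/(d+1)}$ for all $C$ with $\lambda|C|\ge1$, with $c_d$ depending only on $d$. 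This gives $|C|^{-(d-1)/(d+1)}\E[(\hat\theta_{oracle}-|C|)^2]\le c_d\lambda^{-(d+3)/(d+1)}$ in that regime. If instead $\lambda|C|<1$, the trivial bound $\E[|C\setminus\Ch|]\le|C|$ yields $|C|^{-(d-1)/(d+1)}\E[(\hat\theta_{oracle}-|C|)^2]\le\lambda^{-1}|C|^{2/(d+1)}\le\lambda^{-1}\lambda^{-2/(d+1)}=\lambda^{-(d+3)/(d+1)}$. Combining the two regimes and multiplying through by $\lambda^{(d+3)/(d+1)}$ produces a bound uniform in $C$ and $\lambda$, which gives the asserted finite $\limsup$.

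\textbf{Expected main obstacle.} The delicate ingredient is the uniform missing-volume bound used in the rate step: it must hold with a constant depending only on the dimension and, crucially, for \emph{all} convex bodies without any smoothness or curvature hypothesis, and it must be transferred from the fixed-$n$ uniform sampling model to the Poisson model. This is precisely where the economic cap covering estimates of \cite{barany1988convex} (and their de-Poissonised counterpart) enter; the remaining steps are routine conditioning arguments.
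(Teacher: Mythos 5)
Your proposal is correct and follows essentially the same route as the paper: unbiasedness by the tower property, UMVU via sufficiency (likelihood factorisation), completeness (Proposition~\ref{PropSuffCompl}) and Lehmann--Scheff\'e, the variance identity via the law of total variance using $N_\circ\mid\Ch\sim\Poiss(\lambda|\Ch|)$ and the $\sigma(\Ch)$-measurability of $N_\partial$, and the rate via Poissonisation of the fixed-sample missing-volume bound of \cite{barany1988convex}. Your explicit split into the regimes $\lambda|C|\ge 1$ and $\lambda|C|<1$ is a minor presentational refinement of the paper's single Poisson moment computation, not a different argument.
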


\begin{remark}\label{RemAdapt}
The theorem implies that the rate of convergence for the RMSE (root mean-squared error) of the estimator \(\hat\theta_{oracle}\)
 is  \(\lambda^{-(d+3)/(2d+2)}\). In Theorem~\ref{lower_bound_theorem} below,  we prove that the lower bound on
the minimax risk in the PPP model is of the same order implying that
the rate is minimax-optimal. Even more,
the oracle estimator is \emph{adaptive} in the sense, that its rate is faster if the missing volume decays faster. In particular, for polytopes $C$ it is
shown in \cite{barany1988convex} and independently in \cite{dwyer1988convex} that
\(\E[ |C \setminus \Ch|] \thicksim \lambda ^{-1} (\log (\lambda|C|))^{d-1}, \)
which implies a faster (almost parametric) rate of convergence for the RMSE
of the oracle estimator.
\end{remark}

\begin{proof}
The unbiasedness follows immediately from the definition \eqref{sbvcbxcvb}.
By the law of total variance, we obtain
\begin{EQA}
\Var(\hat\theta_{oracle}) &	= & \Var\Big(\frac{N }{\lambda}\Big) - \E \Bigl[ \Var\Big(\frac{N }{\lambda} \cond \Ch\Big)\Bigr]
							 = \frac{|C|}{\lambda} - \E \Bigl[ \Var\Big(\frac{N_\circ }{\lambda}\cond \Ch\Big)\Bigr] \\
					&=&  \frac{|C|}{\lambda} - \E \Big[\frac{\lambda_\circ}{\lambda^2} \Big] = \frac{1}{\lambda} \E[|C \setminus \Ch|]\,.
\label{bcbnfwe}
\end{EQA}
Proposition~\ref{PropSuffCompl} below affirms that the convex hull \(\Ch\) is not only a sufficient, but also a complete statistic such that by  the Lehmann-Scheff\'e theorem, the estimator \(\hat\theta_{oracle}\) has the UMVU property.

Finally, we bound the expectation of the missing volume \( |C \setminus \Ch|\) by Poissonisation, i.e. using that the
 convex hull \(\Ch\) in the PPP model conditionally on the event \(\{N = k\}\)
is distributed as the convex hull \(\Ch_k = \text{conv}\{X_1, ..., X_k\}\) in the model with
$k$ uniform observations on $C$, for which
the following  upper bound is known (e.g., \cite{barany1988convex}):
\begin{EQA}[c]
\label{thm_upper_bound}
\sup_{C \in \C, |C| >0}  \E\Big[\frac{|C \setminus \Ch_k|}{\abs{C}}\Big] = O\big(k^{-2/(d+1)}\big)\,.
\end{EQA}
Thus, it follows by a Poisson moment bound
\begin{EQA}
\sup_{C \in \C, |C| >0} \E\Big[\frac{|C \setminus \Ch|}{\abs{C}^{(d-1)/(d+1)}}\Big]
	&=& \sup_{C \in \C,|C| >0}\sum_{k=0}^\infty  \frac{\ex^{-\lambda\abs{C}} (\lambda\abs{C})^k}{\abs{C}^{-2/(d+1)}k!} \E\Big[\frac{|C \setminus \Ch_k|}{\abs{C}}\Big] \\
	&=&O\big(\lambda^{-2/(d+1)}\big) \,.
\label{EcbCh}
\end{EQA}
This bound, together with \eqref{bcbnfwe}, yields the assertion.
\end{proof}

The lower bound for the risk in the PPP framework can be derived from
 the lower bound in the uniform model with a fixed number of observations, see Thm. 6 in \cite{gayraud1997}.

\begin{theorem}
\label{lower_bound_theorem}
For estimating \(|C|\) in the PPP model with parameter class \(\C\), the following asymptotic  lower bound holds
\begin{EQA}[c]
\liminf_{\lambda\to \infty}  \lambda^{(d+3)/(d+1)}\inf_{ \hat{\theta}_\lambda} \sup_{C \in \C}\E_C [ ( | C| -\hat{\theta}_\lambda)^2] > 0 \,,
\label{lower_bound_statement}
\end{EQA}
where the infimum extends over all estimators \(\hat{\theta}_\lambda\) in the PPP model with intensity \(\lambda\).
\end{theorem}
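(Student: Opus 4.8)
\medskip
\noindent\emph{Proof plan.} The plan is to follow the geometric construction behind the lower bound in the uniform sampling model (Thm.~6 in \cite{gayraud1997}) but to carry out the probabilistic part directly in the PPP model, where the independence of $\N$ over disjoint regions trivialises the key step. Fix a ball $B$ of radius $r_B$ in the interior of ${\mathbf E}$. For a scale $h\in(0,1)$ I would place $m$ unit vectors $u_1,\dots,u_m$ on $\partial B$ that are pairwise $\asymp h$-separated, so that the caps $\Gamma_j\eqdef\{x\in B:\langle x,u_j\rangle>r_B-h^2\}$ are pairwise disjoint; by a packing argument this is possible with $m\asymp h^{-(d-1)}$ caps, each of volume $v\eqdef|\Gamma_j|\asymp h^{d+1}$. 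To every $\tau=(\tau_1,\dots,\tau_m)\in\{0,1\}^m$ I associate
\[
 C_\tau\ \eqdef\ B\cap\bigcap_{j:\,\tau_j=0}\bigl\{x:\langle x,u_j\rangle\le r_B-h^2\bigr\},
\]
an intersection of convex bodies, so $C_\tau\in\C$, with $|C_\tau|=|B|-vm+v\sum_j\tau_j$ and, by disjointness of the caps, $C_\tau\cap\Gamma_j=\Gamma_j$ exactly when $\tau_j=1$ (and $|C_\tau\cap\Gamma_j|=0$ otherwise), while $C_\tau\cap(B\setminus\bigcup_j\Gamma_j)=B\setminus\bigcup_j\Gamma_j$ for every $\tau$. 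Since $vm\asymp h^2\to0$, for small $h$ all $C_\tau$ have volume in $[|B|/2,|B|]$ and lie in ${\mathbf E}$. (Replacing the cut caps by the smooth bumps used by \cite{gayraud1997} changes nothing; the calibration $v\asymp h^{d+1}$, $m\asymp h^{-(d-1)}$ is the same.)

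Next I would lower-bound the minimax risk by the Bayes risk for the uniform prior $\pi$ on $\{0,1\}^m$, i.e.\ the $\tau_j$ i.i.d.\ $\Bernoulli(1/2)$; this is legitimate since $\supp\pi\subseteq\C$. The crucial point is the decoupling induced by the PPP: the restrictions $\N_{\Gamma_1},\dots,\N_{\Gamma_m},\N_{B\setminus\bigcup_j\Gamma_j}$ are mutually independent, the last has a law not depending on $\tau$, and $\N_{\Gamma_j}$ is a Poisson process of intensity $\lambda\,\mathbf 1(\tau_j=1)$ on $\Gamma_j$. Hence, conditionally on the data, the coordinates $\tau_j$ are independent, each $\tau_j$ depending on $\N$ only through $Z_j\eqdef\mathbf 1(\N(\Gamma_j)\ge1)$, and the Bayes risk for $|C_\tau|$ under squared loss equals the mean posterior variance,
\[
 \inf_{\hat\theta}\int\E_{C_\tau}\!\bigl[(\hat\theta-|C_\tau|)^2\bigr]\,\pi(d\tau)\ =\ v^2\sum_{j=1}^{m}\E\bigl[\Var(\tau_j\mid Z_j)\bigr].
\]
An elementary computation with $\mu\eqdef\lambda v$ gives $\E[\Var(\tau_j\mid Z_j)]=\tfrac12\,e^{-\mu}/(1+e^{-\mu})$, which is bounded below by a positive absolute constant as soon as $\mu\le1$; note that this already accounts for the statistician knowing $\lambda$ and the total count $N$, since $N$ is a function of $\N$.

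Finally I would calibrate $h\asymp\lambda^{-1/(d+1)}$ so that $\mu=\lambda v\asymp\lambda h^{d+1}$ is a constant $\le1$, which yields
\[
 \inf_{\hat\theta_\lambda}\ \sup_{C\in\C}\ \E_C\bigl[(|C|-\hat\theta_\lambda)^2\bigr]\ \gtrsim\ v^2 m\ \asymp\ h^{2(d+1)}h^{-(d-1)}\ =\ h^{d+3}\ \asymp\ \lambda^{-(d+3)/(d+1)},
\]
which is exactly \eqref{lower_bound_statement}. I expect the main obstacle to be the geometric bookkeeping of the first step — that $\asymp h^{-(d-1)}$ disjoint caps of volume $\asymp h^{d+1}$ genuinely fit on $\partial B$, that the bodies $C_\tau$ are convex, and that cutting caps leaves the bulk region (hence the $\tau$-free part of the PPP) untouched — which is precisely the content inherited from \cite{gayraud1997}; once the family is in place, the product structure of the PPP makes the rest immediate. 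An alternative but more delicate route is to quote Gayraud's bound for $n$ i.i.d.\ uniform observations verbatim and to transfer it by conditioning on $N$, at the price of having to control that the law of $N$ itself depends on the target $|C|$.
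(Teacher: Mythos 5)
Your proof is correct, and it takes a genuinely different route from the paper's. The paper de-Poissonises: it conditions on $N=k$, invokes Gayraud's lower bound for the uniform model with $k$ observations verbatim, and sums over $k\le\lfloor\lambda\rfloor$ using a Chernoff bound for $N\sim\Poiss(\lambda|C|)$ --- exactly the ``alternative route'' you flag in your last sentence. You instead rebuild the cap construction inside the PPP model and run an Assouad-type Bayes argument there. What your version buys is that the information-theoretic step becomes exact and elementary: because the restrictions of a PPP to the disjoint caps are independent and the bulk region is $\tau$-free, the posterior of $\tau$ factorises, each coordinate depends on the data only through $\mathbf 1(\N(\Gamma_j)\ge 1)$, and the Bayes risk is the closed-form mean posterior variance $\tfrac12 v^2 m\, e^{-\mu}/(1+e^{-\mu})$ --- no Assouad lemma, no total-variation or Kullback--Leibler bounds, and the bound is manifestly uniform over estimators that know $\lambda$ and $N$. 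It is arguably also tighter than the paper's reduction, which as written requires a single set $C_n$ for which the conditional risk bound holds a.s.\ simultaneously for all $k\le n$ and all estimators --- a reading of Gayraud's minimax statement that really presupposes the prior/Bayes formulation you use anyway. The price is the geometric bookkeeping you already identify, and it checks out: $\Gamma_i\cap\Gamma_j=\emptyset$ forces $\Gamma_j\subseteq\{x:\langle x,u_i\rangle\le r_B-h^2\}$ for $i\ne j$, so cutting cap $i$ leaves $\Gamma_j$ and the bulk intact, each $C_\tau$ is a closed intersection of convex sets, and $vm\asymp h^{2}\to0$ keeps $|C_\tau|$ bounded away from $0$; the final calibration $\mu=\lambda v\asymp 1$, i.e.\ $h\asymp\lambda^{-1/(d+1)}$, gives $v^2m\asymp\lambda^{-(d+3)/(d+1)}$ as required.
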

\begin{proof}
We use that an estimator \(\hat{\theta}_\lambda\) in the PPP model is an estimator in the uniform model on the event \(\{N = n \}\).
Then, due to the lower bound in the uniform model in \cite{gayraud1997},
for a constant \(c> 0\) and for all \(n \in  \mathbb{N}\) there exists
a set \(C_n \in \C\) with \(|C_n| \sim 1\) such that
 for all \( k = 1,...,n\),
\begin{EQA}[c]
\E_{C_n} \big[ ( | C_n| -\hat{\theta}_\lambda)^2 \cond N = k \big] >  c n ^{-(d+3)/(d+1)}, \quad a.s.  \,
\end{EQA}
Then, in the PPP model for \(C = C_{ \lfloor  \lambda \rfloor}\) with  \(\lambda |C| \ge 1\),  we have
\begin{EQA}
\E_{C} [ ( | C| -\hat{\theta}_\lambda)^2] & = & \sum_{k \in \mathbb{N}} \E_{C} \big[ ( | C| -\hat{\theta}_\lambda)^2 \cond N = k\big] \, \P (N = k) \\
		&\ge &\sum_{ k \le  \lfloor  \lambda \rfloor} \E_{C} \big[ ( | C| -\hat{\theta}_\lambda)^2 \cond N = k\big] \, \P (N = k) \\
		&> & c\lfloor  \lambda \rfloor^{-(d+3)/(d+1)} \, \big(1 - \P ( N > \lfloor  \lambda \rfloor) \big)\\
		&\sim &  \lambda^{-(d+3)/(d+1)} \,,
\end{EQA}
applying Chernoff's inequality to \(N \sim \text{Poiss}(  \lambda |C| )\)
for the last line. Thus, the lower bound \eqref{lower_bound_statement} follows.
\end{proof}

\begin{proposition}\label{PropSuffCompl}
For known intensity $\lambda>0$, the convex hull \(\Ch = \text{conv}\{X_1, ..., X_N\}\) is a  complete statistic.

\end{proposition}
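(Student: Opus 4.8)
The plan is to compute the distribution of $\Ch$ from the explicit likelihood, turn completeness into an assertion about two finite measures on $(\C,\B_{\C})$, and then close it with a standard $\pi$-system argument. Only one step is genuinely about convex geometry — identifying $\B_{\C}$ with the $\sigma$-algebra generated by the containment events $\{\Ch\subseteq C\}$ — and that is where I expect the work to be.

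\emph{Step 1 (law of $\Ch$ and reformulation).} Let $Q_{C}$ be the image of $\P_{C,\lambda}$ under $\hat C$, a probability measure on $(\C,\B_{\C})$. By \eqref{fPPeb} with $\lambda_{0}=\lambda$, the likelihood $\tfrac{d\P_{C,\lambda}}{d\P_{\mathbf{E},\lambda}}=\ex^{\lambda(\abs{\mathbf{E}}-\abs{C})}\Ind(\Ch\subseteq C)$ is a function of the data only through $\Ch$, so the elementary change-of-variables for a sufficient statistic gives $\tfrac{dQ_{C}}{dQ_{\mathbf{E}}}(K)=\ex^{\lambda(\abs{\mathbf{E}}-\abs{C})}\Ind(K\subseteq C)$. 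Let $g$ be measurable with $\E_{C}\abs{g(\Ch)}<\infty$ and $\E_{C}[g(\Ch)]=0$ for all $C\in\C$ with $\abs{C}>0$. Then $g\in L^{1}(Q_{\mathbf{E}})$ (take $C=\mathbf{E}$) and, dividing out the positive normalising constant,
\[
\int_{\{K\in\C:\,K\subseteq C\}} g\,dQ_{\mathbf{E}}=0\qquad\text{for all }C\in\C\text{ with }\abs{C}>0.
\]
Approximating an arbitrary $C\in\C$ from outside by positive-volume sets $C_{n}\in\C$ with $C_{n}\downarrow C$ (and recalling that $\Ch$ is $\P_{\mathbf{E},\lambda}$-a.s.\ not a singleton, which disposes of $C=\emptyset$), dominated convergence with the majorant $\abs{g}$ shows that this identity in fact holds for every $C\in\C$.

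\emph{Step 2 ($\pi$-system argument).} Put $\mathcal{P}=\{\{K\in\C:K\subseteq C\}:C\in\C\}$. Since the intersection of two closed convex subsets of $\mathbf{E}$ is again one and $\{K\subseteq C_{1}\}\cap\{K\subseteq C_{2}\}=\{K\subseteq C_{1}\cap C_{2}\}$, $\mathcal{P}$ is a $\pi$-system; it contains the whole space $\C=\{K\subseteq\mathbf{E}\}$, and each of its members is $d_{H}$-closed, hence Borel. Writing $g=g^{+}-g^{-}$, Step 1 says the finite measures $g^{+}\cdot Q_{\mathbf{E}}$ and $g^{-}\cdot Q_{\mathbf{E}}$ agree on $\mathcal{P}$ and on $\C$, so by Dynkin's $\pi$-$\lambda$ theorem they agree on $\sigma(\mathcal{P})$. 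Granting $\sigma(\mathcal{P})=\B_{\C}$, this forces $g=0$ $Q_{\mathbf{E}}$-a.s., and since $Q_{C}\ll Q_{\mathbf{E}}$ for every $C$ we get $g(\Ch)=0$ $\P_{C,\lambda}$-a.s.\ for all $C$, i.e.\ completeness.

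\emph{Step 3 (the main obstacle: $\sigma(\mathcal{P})=\B_{\C}$).} The inclusion $\sigma(\mathcal{P})\subseteq\B_{\C}$ is clear. For the converse I would use support functions: fix a countable dense set $D\subseteq S^{d-1}$; for $u\in D$ and rational $t$ the cap $C_{u,t}=\{x:\langle u,x\rangle\le t\}\cap\mathbf{E}$ lies in $\C$, and for nonempty $K$ one has $K\subseteq C_{u,t}$ iff $h_{K}(u)\le t$ with $h_{K}(u)=\sup_{x\in K}\langle u,x\rangle$, so each map $K\mapsto h_{K}(u)$ is $\sigma(\mathcal{P})$-measurable, while $\{\emptyset\}=\{K\subseteq\emptyset\}\in\mathcal{P}$. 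The map $\Phi\colon K\mapsto(h_{K}(u))_{u\in D}$ is continuous (each coordinate is $d_{H}$-continuous) and injective (a nonempty compact convex set is recovered from its support function on a dense set of directions) on the compact metric space of nonempty members of $\C$, hence a homeomorphism onto its image and in particular a Borel isomorphism; as all its coordinates are $\sigma(\mathcal{P})$-measurable, the Borel $\sigma$-algebra of the nonempty members of $\C$ lies in $\sigma(\mathcal{P})$, and adjoining the atom $\{\emptyset\}$ yields $\B_{\C}\subseteq\sigma(\mathcal{P})$. The remaining points — the change-of-variables for $Q_{C}$, the $d_{H}$-closedness of the sets in $\mathcal{P}$, and the outer approximation in Step 1 — are routine, and I would state them as short lemmas or remarks.
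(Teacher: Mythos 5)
Your proposal is correct, and Steps 1--2 (reducing completeness via the likelihood to the statement that $g^{+}\cdot Q_{\mathbf E}$ and $g^{-}\cdot Q_{\mathbf E}$ agree on the $\cap$-stable family of brackets $[C]=\{K\in\C: K\subseteq C\}$, then invoking Dynkin) coincide with the paper's argument. Where you genuinely diverge is the key measurability lemma $\sigma(\mathcal P)=\B_{\C}$. The paper proves it by writing the closed Hausdorff ball as $B_\eps(C)=\langle U_{-\eps}(C)\rangle\cap[U_\eps(C)]$, reducing by separability to the angle sets $\langle x\rangle$, and exhibiting $\langle x\rangle^{\complement}$ as a countable union of brackets $[H_{\delta,v}]$ of half-spaces via Hahn--Banach separation over rational $(\delta,v)$. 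You instead observe that the sublevel sets of the support function $h_K(u)$ are exactly brackets of half-space caps, so the support-function embedding $\Phi:K\mapsto(h_K(u))_{u\in D}$ has all coordinates $\sigma(\mathcal P)$-measurable, and then upgrade this to $\B_\C\subseteq\sigma(\mathcal P)$ by noting that a continuous injection of the compact metric space of nonempty bodies into $\R^{D}$ is a homeomorphism onto its image, hence a Borel isomorphism. Both routes are valid: the paper's is elementary and self-contained convex geometry; yours trades that for the classical support-function parametrization plus a small piece of descriptive-set-theoretic boilerplate, and arguably isolates the geometric content (recovering $K$ from $h_K$ on a dense set of directions) more cleanly. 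You are also more careful than the paper about the parameter sets with $|C|=0$, which the paper absorbs by quantifying the completeness hypothesis over all of $\C$.

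One wording slip to fix in Step 1: $\Ch$ is \emph{not} $\P_{\mathbf E,\lambda}$-a.s.\ a non-singleton, since $\P(N=1)>0$. What you need, and what is true, is that $\Ch$ a.s.\ does not equal any \emph{fixed} singleton $\{x_0\}$, i.e.\ $Q_{\mathbf E}(\{\{x_0\}\})=0$; then approximating $\emptyset$ from outside by balls shrinking to $\{x_0\}$ gives $\int_{[\{x_0\}]}g\,dQ_{\mathbf E}=g(\emptyset)\,Q_{\mathbf E}(\{\emptyset\})=0$, which is the identity at $C=\emptyset$ needed to keep $\mathcal P$ intersection-stable (note $Q_{\mathbf E}(\{\emptyset\})=\ex^{-\lambda|\mathbf E|}>0$, so this atom cannot be ignored). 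With that correction the argument is complete.
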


\begin{proof}
We need to show the implication
\begin{EQA}[c]
\forall C \in \C: \E_C\bigl[T(\Ch)\bigr] = 0 \implies  T(\Ch) = 0 \;\;\; \P_{\mathbf E}-a.s.
\end{EQA}
for any \(\B_{\C}\)-measurable function \(T: \C \to \R\).
From the likelihood  in \eqref{fPPeb} for $\lambda=\lambda_0$, we derive
\begin{EQA}
\E_C\bigl[T(\hat{C})\bigr] & = &\E_{\mathbf E}\Bigl[T(\hat{C}) \exp\bigl(\lambda\abs{{\mathbf E}\setminus C}\bigl){\bf 1}(\hat{C} \subseteq C )\Bigr] \,.
\end{EQA}
Since \(\exp(\lambda\abs{{\mathbf E}\setminus C})\) is deterministic,  \(\E_C\bigl[T(\Ch)\bigr] = 0\) for all $C\in{\C}$ implies
\begin{EQA}[c]
\forall C \in \C:\E_{\mathbf E}\bigl[T(\hat{C}){\bf 1}(\hat{C} \subseteq C) \bigr] = 0 \,.
\end{EQA}
For \(C \in \C \), define the family of convex subsets of \(C\) as \([C] = \{A\in \C |  A \subseteq C \}\) such that $\hat C\subseteq C \iff \hat C\in[C]$. Splitting \(T = T^{+} - T^{-}\)
with non-negative \(\B_{\C}\)-measurable functions \(T^{+}\) and \(T^{-}\), we infer that the measures
\(\mu^\pm(B)=\E_{\mathbf E}[T^{\pm}(\hat{C}){\bf 1}(\hat{C} \in B)] \), $B\in\B_{\C}$, agree on \(\{[C] \,|\, C \in \C \}\).

Note that the  brackets  \(\{[C] | C \in \C \}\) are  \(\cap\)-stable due to \([A] \cap [C] = [A \cap C]\) and
\(A \cap C \in \C\). If the $\sigma$-algebra $\Cs$ generated by $\{[C]\,|\,C\in\C\}$ contains $\B_{\C}$, the uniqueness theorem asserts that
  the measures $\mu^+,\mu^-$
 agree on all Borel sets in \( \B_\C\), in particular on  \(\{T > 0\} \) and \(\{T < 0\} \), which entails $\E_{\mathbf E}[T^+(\hat C)]=\E_{\mathbf E}[T^-(\hat C)]=0$.
Thus, in this case, \(T(\Ch) = 0\) holds  \(\P_{\mathbf E}\)-a.s.

It remains to show that $\Cs=\sigma([C],\,C\in \C)$ equals the Borel \(\sigma\)-algebra \(\B_\C\). This can be derived as a non-trivial consequence of Choquet's theorem, see Thm.~7.8
 in  \cite{molchanov2006}, but we propose a short self-contained proof here.
Let us define the family \(\langle C \rangle = \{B \in \C | C \subseteq B\}\) of convex sets containing $C$.
Then the closed Hausdorff ball with center \(C\) and radius $\eps>0$ has the representation
\begin{EQA}[c]
B_\eps(C) \eqdef \{A \in \C \,|\, d_H(A, C) \le \eps \} = \{A \in \C\, |\, U_{-\eps}(C) \subseteq A \subseteq U_\eps(C) \}\,,
\end{EQA}
with $U_\eps(C)=\{x\in{\mathbf E}\,|\,\text{dist}(x,C)\le \eps\}$, $U_{-\eps}(C)=\{x\in C\,|\,\text{dist}(x,{\mathbf E}\setminus C)\le \eps\}$. Noting that $U_\eps(C),U_{-\eps}(C)$ are closed and convex and thus in ${\mathbf C}$, we obtain
\begin{EQA}[c]
\label{BeC}
B_\eps(C) = \langle U_{-\eps}(C) \rangle \cap [U_{\eps}(C)]\,.
\end{EQA}
Since $(\C,d_H)$ is separable, our problem is reduced to proving that all angle sets $\langle C\rangle$ for $C\in{\mathbf C}$ are in $\Cs$. A further reduction is achieved by noting $\langle C\rangle=\bigcap_{x\in C}\langle x\rangle=\bigcap_{x\in C\cap\Q^d}\langle x\rangle$ setting $\langle x\rangle=\langle \{x\}\rangle$ for short such that it suffices to prove $\langle x\rangle\in\Cs$ for all $x\in{\mathbf E}$.

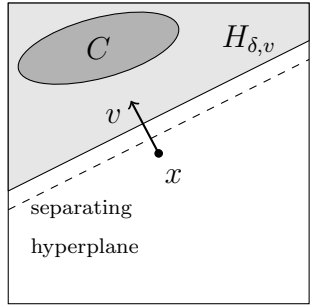
\begin{figure}[tp]
\centering
\begin{tikzpicture}
\draw (0,0) rectangle (4,4);
\draw [black, fill = gray!20!white] (0,1.5) -- (4,3.5) -- (4,4) -- (0,4) -- (0,1.5);
\draw [rotate around={15:(1.2,3.4)},black, fill = gray!60!white] (1.2,3.4) ellipse (1.1 and 0.4);
\draw [black, fill] (2,2) circle [radius=0.05];
\node at (2.2,1.7) {\(x\)};
\node at (3.2,3.5) {\(H_{\delta,v}\)};
\node at (1.2,3.4) {\(C\)};
\node at (1.4,2.5) {\(v\)};
\draw [->, thick] (2,2) -- (1.63,2.7);
\draw [black, dashed] (0,1.25) -- (4,3.25);
\node[text width=2cm] at (1.3,1) {\scriptsize separating \\ hyperplane};
\end{tikzpicture}
\caption{The construction used in the proof.}
\label{HB}
\end{figure}

Now, let \(x \in{\mathbf E}\) and \(C \in \C\) such that \(x \notin C\). Then, by the Hahn-Banach theorem, there are \(\delta > 0, v \in \R^d\) such that \(\langle v, c-x\rangle \ge \delta\) holds for all \(c \in C \).
By a density argument, we may choose $\delta\in\Q^+$ and $v\in\Q^d$. Denoting the corresponding hyperplane intersected with $\mathbf E$ by
$H_{\delta,v} = \{\xi \in {\mathbf E} \, | \, \langle v, \xi-x \rangle \ge \delta \}$, see Figure~\ref{HB},
we conclude
\begin{EQA}[c]
\langle x\rangle^\complement= \bigcup_{\delta \in \Q_{+} } \bigcup_{v \in \Q^d} \underbrace{[ H_{\delta,v}]}_{\in \Cs} \in \Cs \,.
\end{EQA}
Consequently, \(\langle x \rangle \in \Cs\) and thus $\B_\C\subseteq\Cs$ hold.
\end{proof}

\section{Unknown intensity \(\lambda\):  nearly unbiased estimation}
\label{unknown_intensity_section}

In case the intensity \(\lambda\) is unknown and the oracle estimator \(\hat\theta_{oracle}\) in \eqref{sbvcbxcvb}
is inaccessible, the  maximum-likelihood approach suggests to use \( N / |\Ch|\) as an estimator for $\lambda$ in  \eqref{fPPeb}.
This yields the \emph{plug-in} estimator for  the volume,
\begin{EQA}[c]
\hat\theta_{plugin} \eqdef |\Ch| +\frac{N_{\partial}}{N} |\Ch|\,.
\end{EQA}
In the unlikely event $N=\abs{\Ch}=0$, we define $\hat\theta_{plugin}=0$.
This estimator has a significant bias due to the following result, which is proved in the appendix.

\begin{lemma}
\label{lemma_bias_plug_in}
For the bias of the plug-in MLE estimator \(\hat\theta_{plugin}\), it follows with some universal constant \(c >0\)
\begin{EQA}[c]
\label{statement_bias_plug_in}
\abs{C} - \E[\hat\theta_{plugin}]  \ge  c \E[\abs{\Ch\setminus C}]^2\,, \quad \forall C \in \C\,.
\end{EQA}
\end{lemma}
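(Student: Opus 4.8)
The starting point is that $\hat\theta_{oracle}$ is unbiased (Theorem~\ref{ThmOracle}), so $\abs{C}-\E[\hat\theta_{plugin}]=\E[\hat\theta_{oracle}-\hat\theta_{plugin}]$, and from \eqref{sbvcbxcvb} and the definition of $\hat\theta_{plugin}$,
\begin{EQA}[c]
\hat\theta_{oracle}-\hat\theta_{plugin} = \frac{N_{\partial}}{\lambda}-\frac{N_{\partial}\abs{\Ch}}{N}\quad\text{on }\{N\ge 1\},
\end{EQA}
and $0$ on $\{N=0\}$. The plan is to condition on the convex hull: by Theorem~\ref{LemMeasCond}, $\F_{\hat\Kh}=\sigma(\Ch)$, the quantities $N_{\partial}$ and $\abs{\Ch}$ are $\sigma(\Ch)$-measurable, and $N=N_\circ+N_{\partial}$ with $N_\circ\cond\sigma(\Ch)\sim\text{Poiss}(\lambda\abs{\Ch})$. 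Writing $K_\mu\sim\text{Poiss}(\mu)$ for a generic Poisson variable, one obtains
\begin{EQA}[c]
\E\bigl[\hat\theta_{oracle}-\hat\theta_{plugin}\cond\Ch\bigr] = \frac{N_{\partial}}{\lambda}\,\phi\bigl(\lambda\abs{\Ch},N_{\partial}\bigr),\qquad
\phi(\mu,m):= 1-\mu\,\E\Bigl[\tfrac{1}{K_\mu+m}\Bigr],
\end{EQA}
with the convention $\phi(0,m)=1$, which is consistent with the degenerate case $\abs{\Ch}=0$ (where $\hat\theta_{plugin}=0$ by definition and $N=N_{\partial}$). Recall that by $\Ch\subseteq C$ the right-hand side of \eqref{statement_bias_plug_in} is to be read as $\E[\abs{C\setminus\Ch}]^2$, and from $\E\hat\theta_{oracle}=\abs{C}$ one has $\E[\abs{C\setminus\Ch}]=\E[N_{\partial}]/\lambda$.

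The algebraic heart is an exact identity for $\phi$. Using $\mu\,\P(K_\mu=j)=(j+1)\,\P(K_\mu=j+1)$ and reindexing the series,
\begin{EQA}[c]
\phi(\mu,m) = \ex^{-\mu} + (m-1)\,\E\Bigl[\tfrac{1}{K_\mu+m-1}\,\Ind(K_\mu\ge 1)\Bigr]\ \ge\ 0,
\end{EQA}
so $\E[\hat\theta_{oracle}-\hat\theta_{plugin}\cond\Ch]\ge 0$ always --- which already yields the qualitative negative bias of $\hat\theta_{plugin}$. For the quantitative statement one needs a lower bound in the ``bulk'' regime: for integers $m\ge 2$ and $\mu\ge\max(2,m-1)$, applying Jensen's inequality conditionally on $\{K_\mu\ge1\}$ with $\E[K_\mu\cond K_\mu\ge 1]=\mu/(1-\ex^{-\mu})$ and using $m-1\le\mu$ gives $\E[(K_\mu+m-1)^{-1}\Ind(K_\mu\ge1)]\ge c_0/\mu$, hence
\begin{EQA}[c]
\phi(\mu,m)\ \ge\ \frac{c_1\,(m-1)}{\mu}\qquad\text{for }m\ge 2,\ \mu\ge\max(2,m-1),
\end{EQA}
with universal $c_0,c_1>0$. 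This is the only genuinely computational part and is elementary.

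Next I would fix $C\in\C$ and set $G:=\{\abs{\Ch}\ge\abs{C}/2\}\cap\{N_{\partial}\le\lambda\abs{C}/4\}$. On $G$ one has $\abs{\Ch}>0$ (so $N_{\partial}\ge d+1\ge 2$) and, once $\lambda\abs{C}\ge 4$, also $\mu=\lambda\abs{\Ch}\ge\lambda\abs{C}/2\ge\max(2,N_{\partial}-1)$; the previous bound together with $\abs{\Ch}\le\abs{C}$ gives, on $G$,
\begin{EQA}[c]
\E\bigl[\hat\theta_{oracle}-\hat\theta_{plugin}\cond\Ch\bigr]\ \ge\ \frac{c_1\,N_{\partial}(N_{\partial}-1)}{\lambda^2\abs{\Ch}}\ \ge\ \frac{c_1\,N_{\partial}(N_{\partial}-1)}{\lambda^2\abs{C}}.
\end{EQA}
Since the conditional expectation is nonnegative everywhere, discarding $G^c$ gives $\abs{C}-\E[\hat\theta_{plugin}]\ge\frac{c_1}{\lambda^2\abs{C}}\,\E[N_{\partial}(N_{\partial}-1)\Ind_G]$, and it remains to show $\E[N_{\partial}(N_{\partial}-1)\Ind_G]\gtrsim(\E N_{\partial})^2$ for $\lambda\abs{C}$ large. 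I would write $\E[N_{\partial}(N_{\partial}-1)]=\E[N_{\partial}(N_{\partial}-1)\Ind_G]+\E[N_{\partial}(N_{\partial}-1)\Ind_{G^c}]$ and bound: $\E[N_{\partial}(N_{\partial}-1)]=\Var(N_{\partial})+(\E N_{\partial})^2-\E N_{\partial}\ge\tfrac34(\E N_{\partial})^2$ once $\E N_{\partial}\ge 4$; while, using $N_{\partial}\le N$ and splitting off the Chernoff-rare event $\{N>2\lambda\abs{C}\}$, the term $\E[N_{\partial}^2\Ind_{G^c}]$ is dominated by $(\lambda\abs{C})^2\bigl(\P(\abs{C\setminus\Ch}>\abs{C}/2)+\P(N_{\partial}>\lambda\abs{C}/4)\bigr)$, and both probabilities decay faster than $(\E N_{\partial})^2/(\lambda\abs{C})^2$ uniformly over $\C$ --- the first by the (super-polynomial/exponential) concentration of the missing volume and \eqref{thm_upper_bound}, the second by Chebyshev's inequality together with $\Var(N_{\partial})\lesssim\E N_{\partial}$ and $\E N_{\partial}=o(\lambda\abs{C})$, all of which are the stochastic-geometry estimates of \cite{reitzner2003random,reitzner2015}. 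Because $\E N_{\partial}\to\infty$ uniformly over $\C$ as $\lambda\abs{C}\to\infty$, there is a universal $K_0$ with $\E[N_{\partial}(N_{\partial}-1)\Ind_G]\ge\tfrac14(\E N_{\partial})^2$ whenever $\lambda\abs{C}\ge K_0$; combined with $\E[\abs{C\setminus\Ch}]=\E[N_{\partial}]/\lambda$ and $\abs{C}\le\abs{\mathbf E}$, this is exactly \eqref{statement_bias_plug_in} in that regime. For $\lambda\abs{C}<K_0$ I would argue directly from the identity of the first paragraph: $\abs{C}-\E[\hat\theta_{plugin}]=\E[\tfrac{N_{\partial}}{\lambda}\phi(\lambda\abs{\Ch},N_{\partial})]\ge\E[\tfrac{N}{\lambda}\Ind(\abs{\Ch}=0)]\ge\lambda^{-1}\P(N=1)=\abs{C}\ex^{-\lambda\abs{C}}\ge\abs{C}\ex^{-K_0}$, which exceeds $\abs{\mathbf E}^{-1}\ex^{-K_0}\,\E[\abs{C\setminus\Ch}]^2$ since $\E[\abs{C\setminus\Ch}]^2\le\abs{C}^2\le\abs{C}\abs{\mathbf E}$.

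The main obstacle is the control of $\E[N_{\partial}^2\Ind_{G^c}]$: one must show that the exceptional configurations --- where the convex hull misses a constant fraction of the volume of $C$, or a constant fraction of the $\approx\lambda\abs{C}$ sample points lies on its boundary --- contribute negligibly to $\E[N_{\partial}^2]$, \emph{uniformly over all convex bodies} $C$. This is precisely where the sharp variance/concentration results for $N_{\partial}$ and $\abs{C\setminus\Ch}$ are needed, since a naive Markov bound on $\P(\abs{C\setminus\Ch}>\abs{C}/2)$ is too weak. Everything else --- the Poisson identity for $\phi$, the conditioning step, and the separate crude treatment of small $\lambda\abs{C}$ and of the atypical event $\{\abs{\Ch}=0\}$ --- is routine.
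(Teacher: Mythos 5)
Your argument is correct in substance and runs close to the paper's own proof: both reduce the bias of \(\hat\theta_{plugin}\) to the expectation of \(N_{\partial}\) times the gap between (roughly) \(1/(\lambda\abs{\Ch})\) and \(1/N\), condition on the hull via Theorem~\ref{LemMeasCond}, and finish with Efron-type identity \(\E[N_{\partial}]=\lambda\E[\abs{C\setminus\Ch}]\) plus Jensen/Cauchy--Schwarz. The genuine differences are these. You benchmark against the exactly unbiased \(\hat\theta_{oracle}\), whereas the paper benchmarks against \(\hat\theta\) (whose bias is nonnegative and exponentially small), for which the algebra \(\frac{N_{\partial}}{N_\circ+1}-\frac{N_{\partial}}{N}=\frac{N_{\partial}(N_{\partial}-1)}{(N_\circ+1)N}\) is immediate. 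Your exact identity \(\phi(\mu,m)=\ex^{-\mu}+(m-1)\E[(K_\mu+m-1)^{-1}{\bf 1}(K_\mu\ge1)]\) is a nice bonus: it shows the conditional bias is nonnegative pointwise, so you may simply discard the bad event instead of subtracting a correction term, and you also treat the regime \(\lambda\abs{C}<K_0\) explicitly, which the paper's ``asymptotically of much smaller order'' glosses over.

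The one step I would not let stand as written is the control of \(\{N_{\partial}>\lambda\abs{C}/4\}\) via ``\(\Var(N_{\partial})\lesssim\E[N_{\partial}]\) uniformly over \(\C\)''. Within this paper's toolkit that bound is only obtained through \(\Var(N_{\partial})\le\lambda^2\Var(\abs{C\setminus\Ch})+\lambda\E[\abs{C\setminus\Ch}]\) together with \(\lambda\Var(\abs{C\setminus\Ch})\thicksim\E[\abs{C\setminus\Ch}]\), and the latter is verified only in the special cases of Lemma~\ref{LemOracle} --- the paper explicitly flags the universal version as an open problem. The step is repairable without it: Markov's inequality with the \(q\)-th moment bound \eqref{MomNumPoints} gives \(\P(N_{\partial}>\lambda\abs{C}/4)\lesssim(\lambda\abs{C})^{-2q/(d+1)}\), so Cauchy--Schwarz yields \(\E[N_{\partial}^2{\bf 1}(N_{\partial}>\lambda\abs{C}/4)]=o(1)=o((\E[N_{\partial}])^2)\) for \(q\) large enough, using that \(\E[N_{\partial}]\to\infty\) uniformly for \(d\ge2\) (for \(d=1\) one has \(N_{\partial}\le 2\) and your reduction \(\E[N_{\partial}(N_{\partial}-1)]\ge\tfrac34(\E[N_{\partial}])^2\) needs a separate elementary check). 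Alternatively you can sidestep the good event for \(N_{\partial}\) altogether, as the paper does: keep \(N=N_\circ+N_{\partial}\) in the denominator, bound it by \(2\lambda\abs{C}\) on the Chernoff-likely event \(\{N\le2\lambda\abs{C}\}\), and then only the trivial \(\E[N_{\partial}^2]\ge(\E[N_{\partial}])^2\) is needed, with no concentration for \(N_{\partial}\) at all.
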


The maximal bias over $C\in\C$ is thus at least of order $\lambda^{-4/(d+1)}$, which is worse than the minimax rate $\lambda^{-(d+3)/(2d+2)}$ for $d>5$. Yet, in the two-dimensional finite sample study of Section~\ref{numerical_study_section} below, its  performance is quite convincing.
We surmise that \(\hat\theta_{plugin}\) is rate-optimal for \(d \le 5\), but we leave that question aside
because the final estimator we propose  will be nearly unbiased and will satisfy an \emph{exact} oracle inequality. In particular, it is rate-optimal in any dimension. The new idea is to exploit that the number of interior points of $\Ch$ satisfies $N_\circ \cond \Ch \sim \text{Poiss}(\lambda_\circ)$, see \eqref{NccCs}.


\begin{remark}
There is no conditionally unbiased estimator for  \(\lambda_\circ^{-1}\) based on observing $N_\circ \cond \Ch \sim \text{Poiss}(\lambda_\circ)$ for $\lambda_\circ$ ranging over some open (non-empty) interval. Otherwise, an estimator \(\tilde{\mu}(N_\circ)\)
for $\lambda_\circ^{-1}$ would satisfy \(\E[\tilde{\mu}(N_\circ) | \Ch] =\lambda_\circ^{-1} \) implying
\begin{EQA}[c]
\sum_{k = 0}^{\infty}\frac{ \lambda_\circ^{k}}{k!} \tilde{\mu}(k) \ex^{-\lambda_\circ} = \lambda_\circ^{-1} \Rightarrow \sum_{k = 0}^{\infty}\frac{ \lambda_\circ^{k+1}}{k!} \tilde{\mu}(k) = \sum_{k = 0}^{\infty}\frac{ \lambda_\circ^{k}}{k!} \,.
\end{EQA}
The  coefficient for the constant term in the left and right power series would thus differ ($0$ versus $1$), in contradiction with the uniqueness theorem for power series.
\end{remark}

We provide an almost unbiased estimator for \(\lambda_\circ^{-1}\) by noting that the first jump time of a time-indexed
Poisson process
with intensity
$\nu$ is $\Exp(\nu)$-distributed and thus has expectation $\nu^{-1}$. Taking conditional expectation
of the first jump time with respect to the value of the Poisson process at time 1, we conclude that
\[ \hat\mu(N_\circ, \lambda_\circ) \eqdef \begin{cases} (N_\circ+1)^{-1},& \text{ for }N_\circ\ge 1,\\ 1+\lambda_\circ^{-1},&\text{ for } N_\circ=0\end{cases}
\]
satisfies $\E[\hat\mu(N_\circ,\lambda_\circ )|\Ch]=\lambda_\circ^{-1}$. Omitting the term $\lambda_\circ^{-1}$, depending on $\lambda_\circ$, in the unlikely case $N_\circ=0$, we
define our final estimator
\[\hat\theta \eqdef   | \Ch | +\frac{ N_{\partial}}{N_\circ + 1} | \Ch | \,.\]
For the proofs, we also define the  \emph{pseudo}-estimator
\[
\hat\theta_{pseudo} \eqdef  | \Ch | + | \Ch |N_{\partial}\Big(\frac{1}{N_\circ + 1}+ \frac{\ex^{-\lambda_\circ}}{\lambda_\circ}\Big)\,.
\]

\begin{theorem}
\label{estimator_bias}
 The pseudo-estimator \(\hat\theta_{pseudo}\) is unbiased
and the estimator \(\hat\theta\) is asymptotically unbiased in the sense that with constants \(c_1, c_2 > 0\) depending on $d$, $d>1$,
whenever $\lambda\abs{C}\ge 1$:
\begin{EQA}[c]
\label{CEt}
0 \le |C |  - \E [  \hat\theta ] \le c_1  |C| \exp\bigl(-c_2(\lambda|C|)^{(d-1)/(d+1)} \bigr)\,, \quad \forall C \in \C\,.
\end{EQA}
\end{theorem}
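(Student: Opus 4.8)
The plan is to first prove the \emph{exact} unbiasedness of $\hat\theta_{pseudo}$ from Theorems~\ref{LemMeasCond} and~\ref{ThmOracle}, and then to read off \eqref{CEt} from the nonnegative difference $\hat\theta_{pseudo}-\hat\theta$.

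For unbiasedness of $\hat\theta_{pseudo}$ I would condition on $\Ch$: by Theorem~\ref{LemMeasCond}, $N_\circ\cond\Ch\sim\Poisson(\lambda_\circ)$ with $\lambda_\circ=\lambda\abs{\Ch}$, while $N_{\partial}$ and $\abs{\Ch}$ are $\sigma(\Ch)$-measurable. A direct summation of the Poisson series gives $\E[(N_\circ+1)^{-1}\cond\Ch]=(1-\ex^{-\lambda_\circ})/\lambda_\circ$, hence $\E[(N_\circ+1)^{-1}+\ex^{-\lambda_\circ}/\lambda_\circ\cond\Ch]=\lambda_\circ^{-1}$, and therefore $\E[\hat\theta_{pseudo}\cond\Ch]=\abs{\Ch}+\abs{\Ch}\,N_{\partial}\,(\lambda\abs{\Ch})^{-1}=\abs{\Ch}+N_{\partial}/\lambda=\hat\theta_{oracle}$. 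Taking expectations and invoking $\E[\hat\theta_{oracle}]=\abs{C}$ from Theorem~\ref{ThmOracle} shows $\E[\hat\theta_{pseudo}]=\abs{C}$. Since $\hat\theta_{pseudo}-\hat\theta=\abs{\Ch}N_{\partial}\ex^{-\lambda_\circ}/\lambda_\circ=\lambda^{-1}N_{\partial}\ex^{-\lambda\abs{\Ch}}\ge 0$, this already gives the left inequality in \eqref{CEt} together with the identity $\abs{C}-\E[\hat\theta]=\lambda^{-1}\E[N_{\partial}\,\ex^{-\lambda\abs{\Ch}}]$.

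For the right inequality I would use that $\ex^{-\lambda\abs{\Ch}}=\ex^{-\lambda_\circ}=\P(N_\circ=0\cond\Ch)$ by \eqref{NccCs}; since $N_{\partial}$ is $\sigma(\Ch)$-measurable and $N=N_\circ+N_{\partial}$ (all points of the process lie in $\Ch$), the identity becomes $\abs{C}-\E[\hat\theta]=\lambda^{-1}\E[N\Ind(N_\circ=0)]$. Set $n=\lambda\abs{C}$. Bounding $N_{\partial}\le N$, splitting on whether $\abs{\Ch}\ge\alpha_d\abs{C}$ for a dimensional constant $\alpha_d\in(0,1)$ to be chosen, and applying Cauchy--Schwarz with $\E[N^2]=n+n^2\le 2n^2$ on the complement, I obtain $\E[N_{\partial}\ex^{-\lambda\abs{\Ch}}]\le n\ex^{-\alpha_d n}+\sqrt2\,n\,\bigl(\P_C(\abs{\Ch}<\alpha_d\abs{C})\bigr)^{1/2}$. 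It then remains to bound $\P_C(\abs{\Ch}<\alpha_d\abs{C})$ exponentially in $n$, uniformly over $C\in\C$. For this I would take a maximal-volume inscribed simplex $\Delta=\text{conv}\{v_0,\dots,v_d\}\subseteq C$, which classically satisfies $\abs{\Delta}\ge\kappa_d\abs{C}$ for a dimensional constant $\kappa_d>0$, together with the $d+1$ corner sub-simplices $R_i\subseteq\Delta$ on which the $i$-th barycentric coordinate with respect to $\Delta$ is at least $1-\tfrac1{4(d+1)}$, so that $\abs{R_i}=(4(d+1))^{-d}\abs{\Delta}\ge(4(d+1))^{-d}\kappa_d\abs{C}$. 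If the process has a point in every $R_i$, choosing such points $y_0,\dots,y_d$ one sees that their barycentric coordinate matrix $\Lambda$ with respect to $\Delta$ has nonnegative entries, rows summing to $1$, and $\Lambda_{ii}\ge1-\tfrac1{4(d+1)}$, hence $I-\Lambda$ has absolute row sums at most $\tfrac1{2(d+1)}$ and $\abs{\det\Lambda}\ge(1-\tfrac1{2(d+1)})^{d+1}=:c_d'>0$; since $\{y_0,\dots,y_d\}\subseteq\{X_1,\dots,X_N\}$ this forces $\abs{\Ch}\ge\abs{\text{conv}\{y_0,\dots,y_d\}}=\abs{\det\Lambda}\,\abs{\Delta}\ge c_d'\kappa_d\abs{C}=:\alpha_d\abs{C}$. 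A union bound over the $d+1$ corners, using the Poisson void probabilities $\P(\N(R_i)=0)=\ex^{-\lambda\abs{R_i}}$, then yields $\P_C(\abs{\Ch}<\alpha_d\abs{C})\le(d+1)\ex^{-\beta_d n}$ with $\beta_d=(4(d+1))^{-d}\kappa_d$. Plugging this back and using $n\ge1$ (so $n\ge n^{(d-1)/(d+1)}$) gives $\abs{C}-\E[\hat\theta]\le c_1\abs{C}\ex^{-c_2 n^{(d-1)/(d+1)}}$ with $c_1,c_2>0$ depending only on $d$, i.e.\ \eqref{CEt}. (When $d=1$ the exponent is $0$ and the bound is vacuous, consistent with the hypothesis $d>1$.)

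I expect the last uniform tail bound on $\P_C(\abs{\Ch}<\alpha_d\abs{C})$ to be the only genuinely nontrivial point: one must exhibit, for an \emph{arbitrary} convex body and with no boundary regularity, finitely many regions of volume a dimensional fraction of $\abs{C}$ whose joint occupancy forces $\abs{\Ch}$ to be a dimensional fraction of $\abs{C}$ -- precisely what the inscribed simplex together with the barycentric-determinant estimate provide. All remaining steps are routine once Theorems~\ref{LemMeasCond} and~\ref{ThmOracle} are in hand.
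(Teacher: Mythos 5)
Your proposal is correct, and the first half (conditioning on \(\Ch\), summing the Poisson series to get \(\E[(N_\circ+1)^{-1}+\ex^{-\lambda_\circ}/\lambda_\circ\cond\Ch]=\lambda_\circ^{-1}\), hence \(\E[\hat\theta_{pseudo}\cond\Ch]=\hat\theta_{oracle}\) and \(\abs{C}-\E[\hat\theta]=\lambda^{-1}\E[N_{\partial}\ex^{-\lambda\abs{\Ch}}]\)) is exactly the paper's argument. Where you genuinely diverge is in bounding this remainder. The paper imports two external results from stochastic geometry: the deviation inequality of Brunel for the exponential moment \(\E[\exp(\lambda\abs{C\setminus\Ch_k})]\), Poissonised to give \(\ex^{-\lambda\abs{C}}\E[\ex^{\lambda\abs{C\setminus\Ch}}]\le b_3\ex^{-c_2(\lambda\abs{C})^{(d-1)/(d+1)}}\), combined via Cauchy--Schwarz with the moment bound \(\E[N_{\partial}^q]=O((\lambda\abs{C})^{q(d-1)/(d+1)})\); this is what produces the exponent \((\lambda\abs{C})^{(d-1)/(d+1)}\) in the statement. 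You instead bound \(N_{\partial}\le N\), split on \(\{\abs{\Ch}\ge\alpha_d\abs{C}\}\), and control \(\P_C(\abs{\Ch}<\alpha_d\abs{C})\) by a self-contained occupancy argument: a maximal inscribed simplex of volume \(\ge d^{-d}\abs{C}\), its \(d+1\) corner sub-simplices, and the barycentric-determinant estimate \(\abs{\det\Lambda}\ge(1-\tfrac{1}{2(d+1)})^{d+1}\) forcing \(\abs{\Ch}\gtrsim_d\abs{C}\) whenever every corner is occupied, plus a union bound over Poisson void probabilities. All steps of this check out (the max-simplex bound, the homothety volume \((4(d+1))^{-d}\abs{\Delta}\), the Gershgorin-type determinant bound, and the Cauchy--Schwarz with \(\E[N^2]\le 2(\lambda\abs{C})^2\)). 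Your route is more elementary -- it needs no boundary-regularity-free deviation inequality from the recent literature -- and it actually yields the stronger rate \(\ex^{-c\lambda\abs{C}}\) in place of \(\ex^{-c(\lambda\abs{C})^{(d-1)/(d+1)}}\), which also makes the bound non-vacuous for \(d=1\); the paper's route has the advantage that the same exponential-moment machinery is reused verbatim in the proofs of Theorems \ref{estimator_variance} and \ref{new_estimator_risk}, where one needs \(\E[\ex^{2\lambda\abs{C\setminus\Ch}}]\) and \(\P(\abs{C\setminus\Ch}\ge\abs{C}/2)\) anyway. The only cosmetic caveat, shared with the paper, is the interpretation of \(\ex^{-\lambda_\circ}/\lambda_\circ\) on the null-ish event \(\abs{\Ch}=0\), where the product \(\abs{\Ch}N_{\partial}\ex^{-\lambda_\circ}/\lambda_\circ\) should be read as \(N_{\partial}\ex^{-\lambda\abs{\Ch}}/\lambda\).
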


\begin{proof}
We have
\begin{EQA}[c]
\label{vxczvc214}
\E \Bigl[ \frac{1}{N_\circ+1}+ \frac{\ex^{-\lambda_\circ}}{\lambda_\circ}\cond \Ch \Bigr] = \ex^{-\lambda_\circ}\lambda_\circ^{-1} \Big(\sum_{k=0}^\infty \frac{\lambda_\circ^{k+1}}{(k+1)k!}+1\Big)=\lambda_\circ^{-1} \,,
\end{EQA}
which by $\abs{\Ch}\lambda_\circ^{-1}=\lambda^{-1}$ and $\E[\hat\theta_{oracle}]=\abs{C}$ implies  unbiasedness of \(\hat\theta_{pseudo}\).
Thus, it follows that
\[
 |C|-\E[\hat\theta] = \E \bigl[  | \Ch |N_{\partial} \ex^{-\lambda_\circ}\lambda_\circ^{-1}   \bigr] = \lambda^{-1} \E \bigl[ N_{\partial} \ex^{-\lambda |\Ch|}   \bigr] \,\,.
\]
We  exploit the deviation inequality from Thm.~1 in  \cite{Bru14c} 
and derive the bound for the exponential moment of the missing volume in the model with fixed number of points
\begin{EQA}[c]
\E[\exp{(\lambda|C \setminus \Ch_k|)}] \le b_1 \exp{(b_2\lambda|C| k^{-2/(d+1)})}\,, \quad k \ge 2\,,
\end{EQA}
for  positive constants \(b_1, b_2\), depending on the dimension according to \cite{Bru14c}. For the
cases \(k = 0,1\), we have the identity \(\E[\exp{(\lambda|C \setminus \Ch_k|)}] = \exp{(\lambda|C|)} \).
By Poissonisation, similarly to \eqref{EcbCh}, we derive
\begin{EQA}[c]
\label{sCPCbl}
\exp(-\lambda |C|) \E[\exp{(\lambda|C \setminus \Ch|)}] \le b_3 \exp{\big(-c_2(\lambda|C|)^{(d-1)/(d+1)}\big)}\,,
\end{EQA}
for  positive constants \(b_3, c_2\), depending on the dimension.
Hence,  using the Cauchy-Schwarz inequality and  the bound for the moments
 of the points on the convex hull,
\begin{EQA}[c]
\label{MomNumPoints}
\E [N_{\partial}^q] = O\bigl( (\lambda |C|)^{q(d-1)/(d+1)}\bigr) \,, \quad q \in \mathbb{N}\,,
\end{EQA}
 see e.g. Section~2.3.2 in \cite{brunel:tel-01066977},
we derive for a constant \(c_1 > 0\)
\begin{EQA}
\lambda^{-1} \E \bigl[ N_{\partial} \ex^{-\lambda |\Ch|}   \bigr]
			& \le & \lambda^{-1} \ex^{-\lambda |C|}   \E [N_{\partial}^2]^{1/2} \E[ \ex^{2\lambda |C \setminus \Ch|}]^{1/2}     \\
			& \le & c_1 \lambda^{-2/(d+1)}|C|^{(d-1)/(d+1)}
					\exp{\big(-c_2(\lambda|C|)^{(d-1)/(d+1)}\big)}\\
			& \le&  c_1 |C| \exp{\big(-c_2(\lambda|C|)^{(d-1)/(d+1)}\big)} \,.
\label{lexlC}
\end{EQA}

\end{proof}


The next step of the analysis is to  compare the variance of the {pseudo}-estimator \(\hat\theta_{pseudo} \)
with the variance of the oracle estimator \(\hat\theta_{oracle} \), which is UMVU.

\begin{theorem}
\label{estimator_variance}
The following oracle inequality holds with a universal constant $c>0$ and dimension-dependent constants $c_1,c_2 >0$ for all $C \in \C$  with $\lambda\abs{C}\ge 1$:
\begin{EQA}[c]
\Var(\hat\theta_{pseudo} )\le  (1 + c \alpha(\lambda, C))\Var(\hat\theta_{oracle})
						+  r(\lambda,C)\,,
\end{EQA}
where
\begin{align*}
\alpha(\lambda, C) &= \frac{1}{\abs{C}} \Bigl( \frac{1}{\lambda} + \frac{ \Var(\abs{C\setminus\Ch} )}{ \E[ \abs{C\setminus\Ch}]}
	  +   \E[ \abs{C\setminus\Ch}] \Bigr)\,,\\
r(\lambda,C) &=c_1 (\lambda |C |)^{2(d-1)/(d+1)} \exp{\big(-c_2(\lambda|C|)^{(d-1)/(d+1)}\big)}\,.
\end{align*}
\end{theorem}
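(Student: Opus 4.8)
The plan is to compare $\hat\theta_{pseudo}$ with $\hat\theta_{oracle}$ through an orthogonal decomposition conditional on $\Ch$. Using $\lambda^{-1}=\abs{\Ch}/\lambda_\circ$, one writes
\[
\hat\theta_{pseudo}-\hat\theta_{oracle}=\abs{\Ch}\,N_\partial\,W,\qquad
W\eqdef\frac{1}{N_\circ+1}-\frac{1-\ex^{-\lambda_\circ}}{\lambda_\circ}.
\]
Since $N_\circ\mid\Ch\sim\mathrm{Poiss}(\lambda_\circ)$ we have $\E[(N_\circ+1)^{-1}\mid\Ch]=\lambda_\circ^{-1}(1-\ex^{-\lambda_\circ})$ (this is contained in \eqref{vxczvc214}), hence $\E[W\mid\Ch]=0$; moreover $\abs{\Ch}$, $N_\partial$, and therefore $\hat\theta_{oracle}$, are $\sigma(\Ch)$-measurable by Theorem~\ref{LemMeasCond}. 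Consequently $\Cov(\hat\theta_{oracle},\abs{\Ch}N_\partial W)=\E\bigl[\hat\theta_{oracle}\abs{\Ch}N_\partial\,\E[W\mid\Ch]\bigr]=0$, so that
\[
\Var(\hat\theta_{pseudo})=\Var(\hat\theta_{oracle})+\E\bigl[N_\partial^2\abs{\Ch}^2\Var\bigl((N_\circ+1)^{-1}\mid\Ch\bigr)\bigr],
\]
and everything reduces to bounding this last expectation.

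Next I would isolate the Poisson variance factor. Put $v(\mu)\eqdef\mu^2\Var((M+1)^{-1})$ with $M\sim\mathrm{Poiss}(\mu)$, so that $\abs{\Ch}^2\Var((N_\circ+1)^{-1}\mid\Ch)=\lambda^{-2}v(\lambda_\circ)$. An elementary computation --- e.g. from $\E[(M+1)^{-2}]=\mu^{-1}\ex^{-\mu}\int_0^\mu t^{-1}(\ex^t-1)\,dt$, or a Chernoff split of $M$ about its mean --- gives a universal constant $C$ with $v(\mu)\le C$ for all $\mu\ge0$ and $v(\mu)\le C/\mu$ for $\mu\ge\tfrac12$. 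I would then split the expectation over $G\eqdef\{\abs{C\setminus\Ch}\le\abs{C}/2\}$ and $G^{c}$. On $G$ one has $\lambda_\circ=\lambda\abs{\Ch}\ge\lambda\abs{C}/2\ge\tfrac12$, so $\lambda^{-2}v(\lambda_\circ)\le C\lambda^{-3}\abs{\Ch}^{-1}\le2C\lambda^{-3}\abs{C}^{-1}$, whence
\[
\E\bigl[N_\partial^2\abs{\Ch}^2\Var\bigl((N_\circ+1)^{-1}\mid\Ch\bigr)\mathbf{1}_{G}\bigr]\le\frac{2C}{\lambda^{3}\abs{C}}\,\E[N_\partial^2].
\]

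To make the main term sharp I would control $\E[N_\partial^2]$ on the correct scale using stochastic geometry. Unbiasedness of $\hat\theta_{oracle}$ (Theorem~\ref{ThmOracle}) yields the Efron-type identity $\E[N_\partial]=\lambda\E[\abs{C\setminus\Ch}]$, and the Efron--Stein/Poincar\'e-type bounds for the vertex number of random polytopes in \cite{reitzner2003random,pardon2011,reitzner2015} give, uniformly over $\C$ with $\lambda\abs{C}\ge1$, $\Var(N_\partial)\le c\,\E[N_\partial]$. Combining, $\E[N_\partial^2]\le c\lambda\E[\abs{C\setminus\Ch}]+\lambda^{2}\E[\abs{C\setminus\Ch}]^{2}\le c\lambda\E[\abs{C\setminus\Ch}]+\lambda^{2}\E[\abs{C\setminus\Ch}^{2}]$. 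Inserting this, recalling $\Var(\hat\theta_{oracle})=\lambda^{-1}\E[\abs{C\setminus\Ch}]$ and $\E[\abs{C\setminus\Ch}^{2}]=\Var(\abs{C\setminus\Ch})+\E[\abs{C\setminus\Ch}]^{2}$, the $G$-part collapses to
\[
\frac{c'}{\lambda\abs{C}}\Bigl(\tfrac{1}{\lambda}\E[\abs{C\setminus\Ch}]+\Var(\abs{C\setminus\Ch})+\E[\abs{C\setminus\Ch}]^{2}\Bigr)=c'\,\alpha(\lambda,C)\,\Var(\hat\theta_{oracle}),
\]
which is precisely the claimed main term with a universal constant.

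Finally, on $G^{c}$ I would only use $\lambda^{-2}v(\lambda_\circ)\le C\lambda^{-2}$, Cauchy--Schwarz, the moment bound \eqref{MomNumPoints} giving $\E[N_\partial^4]=O((\lambda\abs{C})^{4(d-1)/(d+1)})$, and a deviation inequality for the missing volume in the spirit of \cite{Bru14c} (note \eqref{sCPCbl} alone is too weak here, as it only controls the exponential moment near its mean) giving $\P(\abs{C\setminus\Ch}>\abs{C}/2)\le b_1\exp(-b_2(\lambda\abs{C})^{(d-1)/(d+1)})$ uniformly over $\C$; since the state space $\mathbf E$ is bounded, $\lambda\abs{C}\ge1$ forces $\lambda\ge\abs{\mathbf E}^{-1}$, so the residual $\lambda^{-2}$ is bounded and can be absorbed into a dimension-dependent constant, and one obtains $\E[N_\partial^2\abs{\Ch}^2\Var((N_\circ+1)^{-1}\mid\Ch)\mathbf{1}_{G^{c}}]\le c_1(\lambda\abs{C})^{2(d-1)/(d+1)}\exp(-c_2(\lambda\abs{C})^{(d-1)/(d+1)})=r(\lambda,C)$. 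The delicate point is the third step: a crude Cauchy--Schwarz treatment of the remainder term would lose a factor of order $\lambda\abs{C}$, and the sharp constant appears only because the exact identity $\E[N_\partial]=\lambda\E[\abs{C\setminus\Ch}]$ together with the non-elementary bound $\Var(N_\partial)\lesssim\E[N_\partial]$ make the three contributions reassemble exactly into $\alpha(\lambda,C)$; the other thing to watch is that the cited stochastic-geometry estimates hold uniformly over all of $\C$, with no boundary hypotheses.
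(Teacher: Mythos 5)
Your overall architecture coincides with the paper's: the orthogonal decomposition conditional on $\Ch$ is exactly the law-of-total-variance step, your Poisson variance bound $v(\mu)\le C(1\wedge\mu^{-1})$ for $\mu\ge\tfrac12$ matches the bound $\Var((N_\circ+1)^{-1}\mid\Ch)\le c(1\wedge\lambda_\circ^{-3})$, the split along $\{\abs{\Ch}>\abs{C}/2\}$ and the treatment of the bad event via Cauchy--Schwarz, \eqref{MomNumPoints} and a tail bound for $\abs{C\setminus\Ch}$ are the same (and your two side remarks there --- that \eqref{sCPCbl} must be upgraded to a genuine deviation bound, and that the residual $\lambda^{-2}$ is absorbed using $\lambda\ge\abs{\mathbf E}^{-1}$ --- are both correct and necessary). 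The one step that does not go through as written is the control of $\E[N_{\partial}^2]$ on the good event: you invoke a \emph{universal} bound $\Var(N_{\partial})\le c\,\E[N_{\partial}]$ over all of $\C$, attributed to \cite{reitzner2003random,pardon2011,reitzner2015}. No such bound is available without hypotheses on $C$: it holds for $d\le 2$, for smooth bodies and for polytopes (precisely the cases isolated in Lemma~\ref{LemOracle}), but for general convex bodies in higher dimension it is essentially the open conjecture on universal variance asymptotics discussed after Lemma~\ref{LemOracle}. A telltale sign is that if your bound were true, the term $\Var(\abs{C\setminus\Ch})/\E[\abs{C\setminus\Ch}]$ in $\alpha(\lambda,C)$ would be superfluous; you only recover it by weakening $\E[\abs{C\setminus\Ch}]^2$ to $\E[\abs{C\setminus\Ch}^2]$ after the fact.

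The repair is small and leaves the rest of your computation untouched: replace the variance bound by a second-factorial-moment bound. By the Mecke/Efron-type identity ((17) in \cite{reitzner2015}), two distinct process points can both be vertices of $\Ch$ only if each lies outside the convex hull of the remaining points, whence $\E[N_{\partial}(N_{\partial}-1)]\le\lambda^2\E[\abs{C\setminus\Ch}^2]$ and therefore $\E[N_{\partial}^2]\le\lambda^2\E[\abs{C\setminus\Ch}^2]+\lambda\E[\abs{C\setminus\Ch}]$, valid for every $C\in\C$. Inserting this into your bound $\tfrac{2C}{\lambda^3\abs{C}}\E[N_{\partial}^2]$ and dividing by $\Var(\hat\theta_{oracle})=\lambda^{-1}\E[\abs{C\setminus\Ch}]$ yields $\tfrac{2C}{\abs{C}}\bigl(\lambda^{-1}+\Var(\abs{C\setminus\Ch})/\E[\abs{C\setminus\Ch}]+\E[\abs{C\setminus\Ch}]\bigr)=2C\,\alpha(\lambda,C)$, which is exactly the claimed main term; with this substitution your proof is complete and coincides with the paper's.
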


\begin{proof}
By the law of total variance, we obtain
\begin{EQA}
\Var(\hat\theta_{pseudo} ) &=& \Var\bigl(\E [\hat\theta_{pseudo}| \Ch]\bigr)
								+ \E \bigl[ \Var(\hat\theta_{pseudo}| \Ch) \bigr] \\
						&=&  \Var(\hat\theta_{oracle})
								+ \E \Bigl[ ( N_{\partial} | \Ch |)^2 \Var\Big( \frac{ 1}{N_\circ + 1}
								| \Ch\Big) \Bigr]\,.
\end{EQA}
In view of \(N_\circ\,|\,\Ch  \sim \text{Poiss}(\lambda_\circ)\), a power series expansion gives
\[ \E[(N_\circ+1)^{-2}|\,\Ch]=\lambda_\circ^{-1}\ex^{-\lambda_\circ}\int_0^{\lambda_\circ}(\ex^{t}-1)/t\,dt\,.\]
The conditional variance can for $\lambda_\circ\to\infty$ thus be bounded by
\begin{align*}
\Var((1+N_\circ)^{-1}|\,\Ch) &\le \lambda_\circ^{-1}\ex^{-\lambda_\circ}\int_{\lambda_\circ/2}^{\lambda_\circ}\ex^{t}/t\,dt-(\lambda_\circ)^{-2}+O(\ex^{-\lambda_\circ/4})\\
&=(\lambda_\circ)^{-1}\int_{0}^{\lambda_\circ/2} \ex^{-s}\Big(\frac1{\lambda_\circ-s}-\frac1{\lambda_\circ} \Big)\,ds+O(\ex^{-\lambda_\circ/4})\\
&=\lambda_\circ^{-3}(1+o(1))\,,
\end{align*}
where we have used $(\lambda_\circ-s)^{-1}-\lambda_\circ^{-1}=s\lambda_\circ^{-1}(\lambda_\circ-s)^{-1}$, $\int_0^\infty s\ex^{-s}ds=1$ and dominated convergence. Thanks to $(N_\circ+1)^{-1}\in[0,1]$ we conclude for some constant $c\ge 1$
\[ \Var((1+N_\circ)^{-1}|\,\Ch)\le c(1\wedge \lambda_\circ^{-3}).\]
Consequently, we have
\begin{EQA}
\Var(\hat\theta_{pseudo} )  &\le&  \Var(\hat\theta_{oracle})
								+ \E \bigl[ ( N_{\partial} | \Ch |)^2 c (1 \wedge (\lambda |\Ch|)^{-3}) \bigr] \\
						    &=&  \Var(\hat\theta_{oracle})
								+ c \E \bigl[ ( N_{\partial} | \Ch |)^2  \wedge \lambda^{-3} (N_{\partial})^2 |\Ch|^{-1} \bigr]\,,
\end{EQA}
and with \eqref{bcbnfwe}
\begin{EQA}
\frac{\Var(\hat\theta_{pseudo} )}{\Var(\hat\theta_{oracle})} & \le & 1 + c \frac{ \E \bigl[ ( N_{\partial} \lambda | \Ch |)^2  \wedge  (N_{\partial})^2 (\lambda |\Ch|)^{-1} \bigr]}{ \lambda \E [|C \setminus \Ch|]} \\
& = &1 + c \frac{ \E \bigl[  (N_{\partial})^2  \bigl( ( \lambda | \Ch |)^2  \wedge  (\lambda |\Ch|)^{-1}\bigr)  \bigr]}{ \E[ N_{\partial}] }\,.
\label{fdfsadf1}
\end{EQA}
Define the `good' event \(\mathcal{G} = \{|\Ch| > |C|/2\}\), on which
\(\bigl( (\lambda  | \Ch |)^2  \wedge  (\lambda |\Ch|)^{-1}\bigr)  \le 2(\lambda |C|)^{-1}\).
On the complement  \(\mathcal{G}^c\), we infer from $A^2\wedge A^{-1}\le 1$ for $A>0$
\begin{EQA}
 &&\E \Bigl[  (N_{\partial})^2  \bigl( ( \lambda | \Ch |)^2  \wedge  (\lambda |\Ch|)^{-1}\bigr) {\bf 1}_{\mathcal{G}^c}  \Bigr] \le
		\E \bigl[  N_{\partial}^2    {\bf 1}_{\mathcal{G}^c}  \bigr] \\
		&&\quad \le \E[N_{\partial}^4]^{1/2} \P(|C\setminus \Ch| \ge |C|/2)^{1/2} \\
		&&\quad \le c_1 (\lambda |C |)^{2(d-1)/(d+1)} \exp{\big(-c_2(\lambda|C|)^{(d-1)/(d+1)}\big)}\,,
\label{EBNchlC}
\end{EQA}
for some positive constant \(c_1\) and \(c_2\), using \eqref{sCPCbl} and
\eqref{MomNumPoints}.
It remains to estimate the upper bound \eqref{fdfsadf1} on $\mathcal{G}$
\begin{EQA}[c]
\frac{2c}{\lambda\abs{C}}\frac{ \E  [N_{\partial}^2] }{ \E [N_{\partial}] } = \frac{2c}{\lambda\abs{C}}\Big(\frac{ \Var  (N_{\partial}) }{ \E [N_{\partial}] } + \E[ N_{\partial}]\Big) \,.
\label{laCfCf}
\end{EQA}
Using the identity (17) in \cite{reitzner2015} for the factorial moments for the number of vertices \(N_{\partial}\),
 we derive \(\Var  (N_{\partial})  \le \lambda^2 \Var(\abs{C\setminus\Ch})
+  \lambda\E[ \abs{C\setminus\Ch}]  \) in view of \(  \E [N_{\partial}] = \lambda \E[ \abs{C\setminus\Ch}]\). Thus, \eqref{laCfCf}
is bounded by
\begin{EQA}[c]
\frac{2c}{\lambda\abs{C}}\frac{ \E  [N_{\partial}^2] }{ \E [N_{\partial}] }  \le
	  \frac{2c}{\abs{C}} \Bigl( \frac{1}{\lambda} + \frac{ \Var(\abs{C\setminus\Ch} )}{ \E[ \abs{C\setminus\Ch}]}
	  +   \E[ \abs{C\setminus\Ch}] \Bigr)\,,
\end{EQA}
which yields the assertion.
\end{proof}

As a result, we obtain an \emph{oracle inequality} for the estimator \(\hat\theta\).

\begin{theorem}
\label{new_estimator_risk}
It follows for the risk of the estimator \(\hat\theta\)
for all $C \in \C$ whenever $\lambda\abs{C}\ge 1$:
\begin{EQA}[c]
\E[(\hat\theta - |C|)^2]^{1/2} \le  (1+c \alpha(\lambda, C))\E[(\hat\theta_{oracle}-\abs{C})^2]^{1/2} +r(\lambda,C)\,,
\end{EQA}
 with constant \(c > 0\) and \(\alpha(\lambda, C),r(\lambda,C)\) from Theorem \ref{estimator_variance}. For any $C\in\C$ and $\lambda>0$ we have $\alpha(\lambda,C)\le 1+\frac{1}{\lambda\abs{C}}$.
\end{theorem}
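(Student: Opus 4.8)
\noindent The plan is to use the unbiased \emph{pseudo}-estimator $\hat\theta_{pseudo}$ of Theorem~\ref{estimator_bias} as a pivot. Writing $\hat\theta - |C| = (\hat\theta_{pseudo} - |C|) - (\hat\theta_{pseudo} - \hat\theta)$ and applying the triangle inequality in $L^2(\P_C)$,
\[
\E\bigl[(\hat\theta - |C|)^2\bigr]^{1/2} \le \E\bigl[(\hat\theta_{pseudo} - |C|)^2\bigr]^{1/2} + \E\bigl[(\hat\theta_{pseudo} - \hat\theta)^2\bigr]^{1/2}\,,
\]
the first summand will reproduce the oracle term and the second will turn out to be negligible of the order of the remainder.

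For the first summand, unbiasedness of $\hat\theta_{pseudo}$ (Theorem~\ref{estimator_bias}) and of $\hat\theta_{oracle}$ (Theorem~\ref{ThmOracle}) turn mean squared errors into variances, so Theorem~\ref{estimator_variance} together with $\sqrt{a+b}\le\sqrt a+\sqrt b$ and $\sqrt{1+x}\le 1+x$ $(x\ge 0)$ gives
\[
\E\bigl[(\hat\theta_{pseudo} - |C|)^2\bigr]^{1/2} = \Var(\hat\theta_{pseudo})^{1/2} \le \bigl(1 + c\,\alpha(\lambda,C)\bigr)\,\E\bigl[(\hat\theta_{oracle} - |C|)^2\bigr]^{1/2} + r(\lambda,C)^{1/2}\,.
\]
For the second summand, recall from the definitions that $\hat\theta_{pseudo} - \hat\theta = \lambda^{-1}N_\partial\ex^{-\lambda|\Ch|}$, hence $\E[(\hat\theta_{pseudo}-\hat\theta)^2] = \lambda^{-2}\E[N_\partial^2\ex^{-2\lambda|\Ch|}]$. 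I would split this expectation over the good event $\mathcal G = \{|\Ch| > |C|/2\}$, exactly as in the proofs of Theorems~\ref{estimator_bias} and~\ref{estimator_variance}: on $\mathcal G$ one has $\ex^{-2\lambda|\Ch|} < \ex^{-\lambda|C|}$, so that piece is at most $\lambda^{-2}\ex^{-\lambda|C|}\E[N_\partial^2]$; on $\mathcal G^c$ one has $\ex^{-2\lambda|\Ch|}\le 1$, and Cauchy--Schwarz with the deviation estimate $\P(|C\setminus\Ch|\ge|C|/2)\lesssim\exp\bigl(-c(\lambda|C|)^{(d-1)/(d+1)}\bigr)$ (Thm.~1 in \cite{Bru14c}, cf.~\eqref{sCPCbl}) bounds $\lambda^{-2}\E[N_\partial^2\,{\bf 1}_{\mathcal G^c}]\le\lambda^{-2}\E[N_\partial^4]^{1/2}\P(\mathcal G^c)^{1/2}$. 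Using the vertex-moment bounds \eqref{MomNumPoints} and that $|C|\le|\mathbf E|$ is uniformly bounded, both pieces are of the order $(\lambda|C|)^{2(d-1)/(d+1)}\exp\bigl(-c'(\lambda|C|)^{(d-1)/(d+1)}\bigr)$, so that $\E[(\hat\theta_{pseudo}-\hat\theta)^2]^{1/2}$ is of the order $r(\lambda,C)^{1/2}$.

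Adding the two summands, the remainder in the resulting inequality is a constant multiple of $r(\lambda,C)^{1/2}$; since $r(\lambda,C)^{1/2}$ is again of the form $\mathrm{const}\cdot(\lambda|C|)^{(d-1)/(d+1)}\exp\bigl(-(c_2/2)(\lambda|C|)^{(d-1)/(d+1)}\bigr)$, i.e.\ has the functional shape of $r(\lambda,C)$ with the dimension-dependent constants $c_1,c_2$ replaced by others, it may be written again as $r(\lambda,C)$ (using $(\lambda|C|)^{(d-1)/(d+1)}\le(\lambda|C|)^{2(d-1)/(d+1)}$ for $\lambda|C|\ge 1$), which yields the claimed oracle inequality. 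For the bound on $\alpha$, note that $0\le|C\setminus\Ch|\le|C|$ a.s.\ implies $\E[|C\setminus\Ch|^2]\le|C|\,\E[|C\setminus\Ch|]$, whence $\Var(|C\setminus\Ch|)/\E[|C\setminus\Ch|]+\E[|C\setminus\Ch|]=\E[|C\setminus\Ch|^2]/\E[|C\setminus\Ch|]\le|C|$; inserting this into the definition of $\alpha(\lambda,C)$ gives $\alpha(\lambda,C)\le|C|^{-1}(\lambda^{-1}+|C|)=1+(\lambda|C|)^{-1}$.

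The only genuinely delicate point is the bookkeeping of the remainder: checking that the new contribution $\E[(\hat\theta_{pseudo}-\hat\theta)^2]^{1/2}$ together with the $r(\lambda,C)^{1/2}$ produced by the variance oracle inequality can be subsumed into a single remainder of the stated form. This relies on compactness of the observation window $\mathbf E$ (so $|C|$ is uniformly bounded) and on a super-exponentially decaying factor $\exp(-c(\lambda|C|)^{(d-1)/(d+1)})$ absorbing polynomial prefactors in $\lambda|C|$; everything else is a direct combination of Theorems~\ref{ThmOracle}, \ref{estimator_bias} and~\ref{estimator_variance} with the standard stochastic-geometry moment and deviation bounds.
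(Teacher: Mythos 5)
Your proposal is correct and follows essentially the same route as the paper: the identity $\hat\theta=\hat\theta_{pseudo}-\lambda^{-1}N_\partial\ex^{-\lambda|\Ch|}$, an exponentially small bound on $\E[(\hat\theta-\hat\theta_{pseudo})^2]$ via the vertex-moment and missing-volume deviation estimates, the $L^2$ triangle inequality combined with Theorems~\ref{ThmOracle} and~\ref{estimator_variance}, and the bound $\E[|C\setminus\Ch|^2]\le|C|\,\E[|C\setminus\Ch|]$ for $\alpha$. Your good-event split in place of the paper's single Cauchy--Schwarz step, and your explicit bookkeeping showing that $r(\lambda,C)^{1/2}$ can be re-absorbed into a remainder of the same functional form with adjusted constants, are minor variations that only make explicit what the paper leaves implicit.
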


\begin{proof}
In view of $\lambda_\circ=\lambda\abs{\Ch}$, we have \(\hat\theta  = \hat\theta_{pseudo} - \lambda^{-1}N_{\partial}\ex^{-\lambda\abs{\Ch}}\) and we derive as in \eqref{lexlC} and \eqref{EBNchlC} with some constants $c_1,c_2>0$
\begin{align*}
\E[(\hat\theta -\hat\theta_{pseudo})^2] &\le \lambda^{-2}\E[N_{\partial}^4]^{1/2}\E[\ex^{-4\lambda\abs{\Ch}}]^{1/2}
\le c_1^2 \exp{\big(-2c_2 (\lambda\abs{C})^{(d-1)/(d+1)}\big)}.
\end{align*}
To establish the oracle inequality, we apply the triangle inequality in $L^2$-norm together with Theorems \ref{ThmOracle} and \ref{estimator_variance}.

The universal bound on $\alpha(\lambda,C)$ follows from the rough bound $\E[\abs{C\setminus\Ch}^2]\le \abs{C}\E[\abs{C\setminus\Ch}]$.
\end{proof}

Note that the remainder term $r(\lambda,C)$ is exponentially small in $\lambda\abs{C}$. Therefore,
an immediate implication of Theorem~\ref{new_estimator_risk} is that asymptotically our estimator \(\hat\theta\)
 is
minimax rate-optimal in all dimensions, where the lower bound is proved in the next section. Yet, even more is true: the oracle inequality  is in all well studied cases \emph{exact} in the sense that $\alpha(\lambda,C)\to 0$ holds for $\lambda\to\infty$ such that the
 the UMVU risk of $\hat\theta_{oracle}$ is attained asymptotically.

\begin{lemma}
\label{LemOracle}
We have tighter bounds on \(\alpha(\lambda, C)\)  from Theorem \ref{estimator_variance} in the following cases:
\begin{enumerate}
\item for \(d = 1,2\) and  \( C \in \C\) arbitrary: \( \alpha(\lambda, C) \lesssim (\lambda |C|)^{-2/(d+1)}\),
\item for \( d \ge 2\), \(C\)  with $C^2$-boundary of positive curvature: \( \alpha(\lambda, C) \lesssim  (\lambda |C|)^{-2/(d+1)}\),
\item for \( d \ge 2\) and \(C\)  a polytope: \( \alpha(\lambda, C) \lesssim \lambda^{-1} (\log(\lambda|C|))^{d-1}\).
\end{enumerate}
\end{lemma}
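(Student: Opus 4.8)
The plan is to bound separately the three summands of $\alpha(\lambda,C)$ from Theorem~\ref{estimator_variance}, namely $(\lambda\abs{C})^{-1}$, $\abs{C}^{-1}\Var(\abs{C\setminus\Ch})/\E[\abs{C\setminus\Ch}]$ and $\abs{C}^{-1}\E[\abs{C\setminus\Ch}]$, and to check in each of the three regimes which one dominates. The first summand is harmless: since $2/(d+1)\le 1$ for $d\ge 1$, we have $(\lambda\abs{C})^{-1}\le(\lambda\abs{C})^{-2/(d+1)}$ whenever $\lambda\abs{C}\ge 1$ (cases 1 and 2), while $(\lambda\abs{C})^{-1}\lesssim\lambda^{-1}(\log(\lambda\abs{C}))^{d-1}$ once $\lambda\abs{C}$ is large (case 3). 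So the work lies in the two remaining summands.

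For the third summand I would feed in the known asymptotics for the expected missing volume, obtained after Poissonisation exactly as in \eqref{EcbCh}. For arbitrary $C\in\C$ this is \eqref{EcbCh} itself, $\abs{C}^{-1}\E[\abs{C\setminus\Ch}]=O((\lambda\abs{C})^{-2/(d+1)})$, which also governs case 2, where in fact $\abs{C}^{-1}\E[\abs{C\setminus\Ch}]\thicksim(\lambda\abs{C})^{-2/(d+1)}$ for $C$ with $C^2$-boundary of positive curvature (classical, e.g.\ \cite{barany1988convex}); and for polytopes $\E[\abs{C\setminus\Ch}]\thicksim\lambda^{-1}(\log(\lambda\abs{C}))^{d-1}$ by \cite{barany1988convex, dwyer1988convex}, cf.\ Remark~\ref{RemAdapt}, so that $\abs{C}^{-1}\E[\abs{C\setminus\Ch}]\lesssim\lambda^{-1}(\log(\lambda\abs{C}))^{d-1}$. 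In every case this summand already matches the claimed rate, and in cases 1 and 2 it is the dominant one.

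The core of the proof is the variance summand $\abs{C}^{-1}\Var(\abs{C\setminus\Ch})/\E[\abs{C\setminus\Ch}]$. I would first reduce to the fixed-sample-size model: by the law of total variance $\Var(\abs{C\setminus\Ch})=\E[\Var(\abs{C\setminus\Ch}\cond N)]+\Var(\E[\abs{C\setminus\Ch}\cond N)]$, and since $k\mapsto\E[\abs{C\setminus\Ch}\cond N=k]$ varies slowly while $N$ concentrates around $\lambda\abs{C}$ at scale $(\lambda\abs{C})^{1/2}$, the second term is of lower order. In the fixed-sample-size model I would invoke the sharp variance estimates available in each regime. For $d=1$, $\abs{C\setminus\Ch}$ is, up to the negligible event $\{N=0\}$, the sum of the two end gaps, each $\Exp(\lambda)$-distributed, whence $\E[\abs{C\setminus\Ch}]\thicksim\lambda^{-1}$ and $\Var(\abs{C\setminus\Ch})\lesssim\lambda^{-2}$. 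For $d=2$ I would use Pardon's variance bounds and central limit theorem for random polygons in an arbitrary planar convex body \cite{pardon2011}. For $C^2$-bodies of positive curvature I would use the Efron--Stein/Poincar\'e-type estimates together with the variance asymptotics $\Var(\abs{C\setminus\Ch})\thicksim\abs{C}^2(\lambda\abs{C})^{-(d+3)/(d+1)}$ of \cite{reitzner2003random, reitzner2015}; and for polytopes the asymptotics $\Var(\abs{C\setminus\Ch})\thicksim\abs{C}^2(\lambda\abs{C})^{-2}(\log(\lambda\abs{C}))^{d-1}$ (see \cite{reitzner2015} and the references therein). Dividing by $\abs{C}\E[\abs{C\setminus\Ch}]$ and inserting the expected-volume asymptotics of the previous step, in each case this summand comes out of order $(\lambda\abs{C})^{-1}$, hence $\le(\lambda\abs{C})^{-2/(d+1)}$ in cases 1 and 2 and $\lesssim\lambda^{-1}(\log(\lambda\abs{C}))^{d-1}$ in case 3. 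Summing the three summands yields the assertion.

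The main obstacle is precisely this variance summand. The delicate point is twofold: in case 1 for $d=2$ the bound must hold \emph{uniformly over all convex bodies} in $\R^2$, which is exactly the strength of \cite{pardon2011}, and in cases 2 and 3 one needs matching upper \emph{and} lower bounds for $\Var(\abs{C\setminus\Ch})$ --- a bare Efron--Stein upper bound is too crude --- so that the ratio $\Var(\abs{C\setminus\Ch})/\E[\abs{C\setminus\Ch}]$ is pinned to the correct order $\abs{C}(\lambda\abs{C})^{-1}$ rather than merely bounded by $\abs{C}$.
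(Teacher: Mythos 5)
Your proof is correct and follows essentially the same route as the paper: bound the expectation summand by the known missing-volume asymptotics, and bound the ratio $\Var(\abs{C\setminus\Ch})/\E[\abs{C\setminus\Ch}]$ by a constant multiple of $\abs{C}(\lambda\abs{C})^{-1}$ using Pardon's result for $d\le 2$, the variance upper bound for smooth bodies paired with Sch\"utt's lower bound on the expected missing volume, and the B\'ar\'any--Reitzner variance bound paired with B\'ar\'any's expectation lower bound for polytopes. Two minor remarks: the de-Poissonisation detour via the law of total variance is unnecessary (and would itself require a quantitative argument), since the cited variance and expectation bounds are available directly in the Poisson model; and what the argument actually requires is an \emph{upper} bound on $\Var(\abs{C\setminus\Ch})$ together with a \emph{lower} bound on $\E[\abs{C\setminus\Ch}]$ --- which you do supply --- rather than matching lower bounds on the variance itself.
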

\begin{proof}
Let us restrict to \(|C| = 1\), the case of general volume follows by rescaling. In view of  the expectation upper bound \eqref{EcbCh}, the main issue is to bound
\( \Var(\abs{C\setminus\Ch} ) /  \E[ \abs{C\setminus\Ch}]\)  uniformly.
Case (1) follows from \cite{pardon2011}, where \( \lambda \Var(\abs{C\setminus\Ch}) \thicksim \E[ \abs{C\setminus\Ch}]  \) is established.

For case (2) with smooth boundary, the upper bound for the variance,
\(\Var(\abs{C\setminus\Ch} ) \lesssim \lambda^{-(d+3)/(d+1)}\), was obtained in \cite{reitzner2005central}, while the lower bound
for the first moment, \( \E[ \abs{C\setminus\Ch}] \gtrsim \lambda^{-2/(d+1)}\), is due to \cite{schutt1994random}.

For the case (3) of polytopes,   the upper bound  \(\Var(\abs{C\setminus\Ch} ) \lesssim \lambda^{-2}(\log \lambda)^{d-1}\) was obtained in \cite{barany2010variance},
while the lower bound for the first moment, \( \E[ \abs{C\setminus\Ch}] \gtrsim \lambda^{-1}(\log \lambda)^{d-1}\), was proved in \cite{barany1988convex}. The expectation upper bound from Remark \ref{RemAdapt} thus yields the result.
\end{proof}

One could conjecture that \( \lambda \Var(\abs{C\setminus\Ch}) \thicksim \E[ \abs{C\setminus\Ch}]  \)
 holds universally for all convex sets in arbitrary dimensions and thus that the oracle inequality is always exact.
Proving such a universal bound is a challenging open problem in stochastic geometry, strongly connected to the discussion on universal variance asymptotics in terms of the floating body by \cite{barany2010variance}.



\section{Finite sample behaviour and dilated hull estimator}
\label{numerical_study_section}

\begin{figure}[tp]
\centering
  \includegraphics[width=0.8\linewidth]{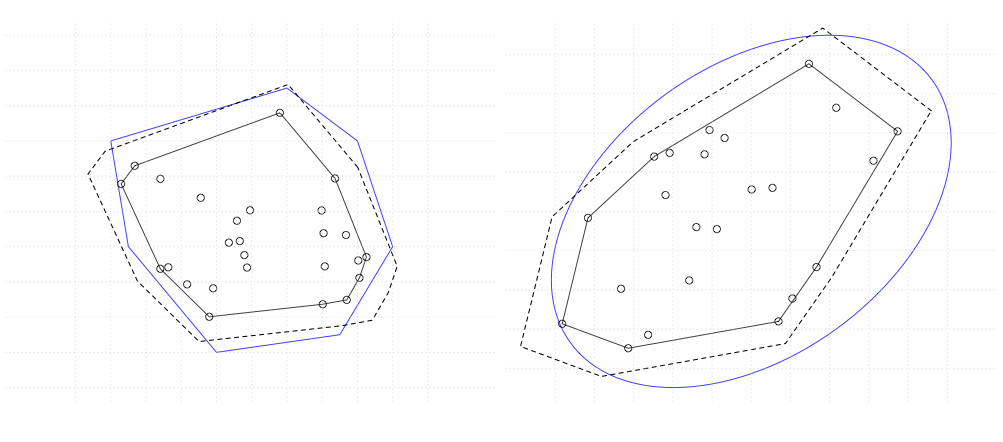}\\[-5mm]
  \caption[]{The two  convex sets (blue), observations (points), their convex hulls (black lines)
and dilated convex hulls (black dashed lines).}
\label{polygon_ellipse_figures}
\end{figure}

In this section, we demonstrate the performance of the main estimator \(\hat{\theta}\) numerically and compare it to other estimators
including the naive estimator \(|\Ch|\),  the naive oracle estimator \(N / \lambda\),  the UMVU oracle estimator \(\hat{\theta}_{oracle}\) and the plug-in MLE estimator \(\hat\theta_{plugin} = |\Ch| (1+ N_{\partial}/ N)\).
The main competitor from the literature is a  rate-optimal  estimator proposed in \cite{gayraud1997}. In their construction, the whole sample
is divided into three equal parts \(X\), \(X^\prime\) and \(X^{\prime\prime}\) of sizes \(N^\star  \)
 (without loss of generality \(N^\star\in\mathbb{N}\)) and the estimator is given by
\begin{EQA}[c]
\hat\theta_{G} = |\Ch| + \frac{|\Ch^{\prime\prime}|}{N^\star} \sum_{i=1}^{N^\star } {\bf 1} (X^\prime_i \notin \Ch)\,,
\end{EQA}
where \(\Ch^{\prime\prime}\)  is the convex hull of the third sample \(X^{\prime\prime}\).
The data points are simulated for  two convex sets: an ellipse and a polygon; see Figure~\ref{polygon_ellipse_figures}.

\begin{figure}[tp]
  \includegraphics[width=\linewidth]{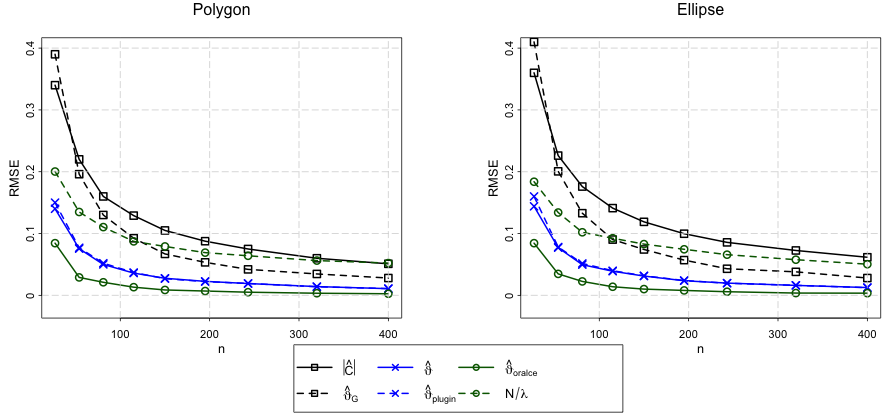}
  \caption[]{Monte Carlo RMSE estimates for the studied estimators for the volume of two convex sets: a polygon and an ellipse.}
\protect\label{rmse_estimates_plot}
\end{figure}

The RMSE  estimate normalised by the area of the true set  is based on \(M = 500\) Monte
Carlo iterations in each case.
 The results
of the simulations  are depicted in
 Figure~\ref{rmse_estimates_plot} where $n=\lambda\abs{C}$ denotes the expected total number of points. The worst convergence rate of $N/\lambda$ is clearly visible.
More importantly, we  see that the RMSE of $\hat\theta$ approaches the oracle risk for larger $n$ (i.e. $\lambda$) as the oracle inequality predicts.
It is also conspicuous that in the studied cases the plug-in estimator \(\hat\theta_{plugin}\) and the estimator
\(\hat\theta\) perform rather similarly. This is explained by the fact that the number of points \(N_{\partial}\) on the convex hull
increases with a moderate speed  in the two-dimensional case, \(\E[N_{\partial}] = O(\lambda^{1/3})\), which results in a
small difference between the multiplication factors \(N_{\partial} / N\) and \(N_{\partial} / (N_\circ + 1) \).
The simulations in two dimensions were implemented using the R package ``spatstat'' by \cite{spatstat}. To illustrate the
sub-optimality of the plug-in estimator \(\hat\theta_{plugin}\) in high dimensions,
we provide results of numerical simulations in dimensions \(d = 3,4,5,6\) for the case when the true set
\(C \) is a unit cube \(C = [0,1]^d\), see Figure~\ref{d3456}.
The simulations were implemented using the R package ``geometry'' by \cite{geometry}.

As an application of the obtained results, we propose a new estimator for the convex set itself:
\begin{EQA}
\tilde{C} & \eqdef & \Big\{\hat x_0+\Bigl(\frac{\hat\theta}{|\Ch|}\Bigr)^{1/d} (x-\hat x_0)\,\Big|\,x\in\Ch\Big\}  \\
		&=& \Big\{\hat x_0+\Bigl(\frac{N+1}{N_\circ+1}\Bigr)^{1/d}  (x-\hat x_0)\,\Big|\,x\in\Ch\Big\} \,,
\end{EQA}
which is just the dilation of the convex hull $\Ch$ from its barycentre $\hat x_0$,
 see the dashed polygons in Figure~\ref{polygon_ellipse_figures}. Since the convex hull is a sufficient statistic (for known $\lambda$), the points in its interior do not bear any information on the shape of $C$ itself such that the barycentre is a reasonable choice. There are, of course, other enlargements of the convex hull conceivable like  \(\argmin_{B \in \C, |B| = \hat\theta} d_H(B,\Ch)\), the convex set closest (in Hausdorff distance) to $\Ch$ with volume $\hat\theta$. The intuition behind these estimators is based on the observation
 that once the volume of the true set is known, we can estimate the set itself faster (in the constant), and
\(\hat\theta\) is a reasonable substitute for the true volume due to its fast rate of convergence.

\begin{figure}[tp]
\centering
  \includegraphics[width=\linewidth]{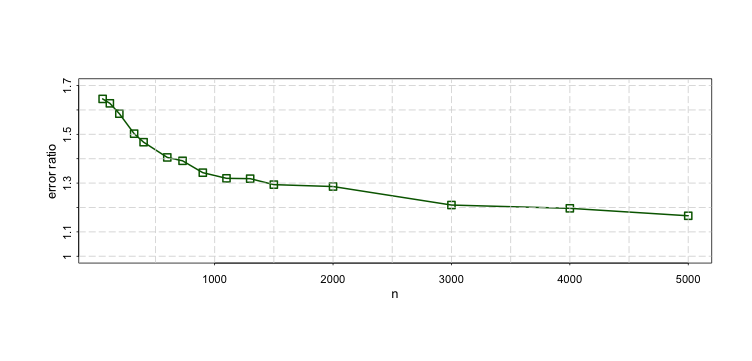}\\[-1cm]
\caption{Monte Carlo error ratio for the convex hull and its dilation when the true set is a polygon.}\label{error_ratio_plot}
\end{figure}

A detailed analysis is not pursued here, but in a small simulation study we investigate the behaviour of the new dilated hull
estimator for the above polygon.
The error ratio \(\E[ \abs{C \Delta \Ch}] / \E [\abs{C \Delta \tilde{C}} ]  \) in terms of
 the symmetric difference $A\Delta B= ( A \setminus B) \cup  (B \setminus A)$ is approximated in \(M = 500\) Monte Carlo iterations and shown in Figure~\ref{error_ratio_plot}. It turns out that the dilation significantly improves the convex hull as an estimator for \(C\), especially for a small number of observations.


\section{Appendix}
\subsection{Proof of Theorem~\ref{LemMeasCond}}

The proof is split into several statements, which might be of interest on their own.

\begin{lemma}
\label{LemMeas}
The random variable \(\N({\Kh})\) is measurable with respect to \(\F_\Kh\) for any stopping set \(\Kh\).
\end{lemma}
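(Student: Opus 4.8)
The plan is to reduce the statement, via the definition of $\F_\Kh$, to showing that $\{\N(\Kh)\le j\}\cap\{\Kh\subseteq K\}\in\F_K$ for every integer $j\ge 0$ and every $K\in\K$. I would obtain this by approximating $\Kh$ from outside by stopping sets with only finitely many values, treating that case by hand, and then passing to the limit with the help of the continuity from above of $(\F_K)_{K\in\K}$.

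For $m\in\mathbb N$ let $\mathcal Q_m$ be the finite collection of closed dyadic cubes of side $2^{-m}$ meeting $\mathbf E$, and set $\Kh_m=\mathbf E\cap\bigcup\{Q\in\mathcal Q_m:Q\cap\Kh\neq\emptyset\}$. Then $\Kh_m$ is compact, takes values in the fixed finite family $\mathcal A_m=\{\mathbf E\cap\bigcup\mathcal S:\mathcal S\subseteq\mathcal Q_m\}$, and $\Kh\subseteq\Kh_{m+1}\subseteq\Kh_m$ with $\bigcap_m\Kh_m=\Kh$, because any $x\notin\Kh$ has $\mathrm{dist}(x,\Kh)>0$ and hence lies in no cube of $\mathcal Q_m$ meeting $\Kh$ once $2^{-m}$ is small enough. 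Since $\N$ is a.s. a finite measure, continuity from above gives $\N(\Kh_m)\downarrow\N(\Kh)$ pointwise.

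I would first check that for each $m$ and each $L\in\K$ one has $\{\N(\Kh_m)\le j\}\cap\{\Kh_m\subseteq L\}\in\F_L$. Using $\N(\Kh_m)=\sum_{A\in\mathcal A_m}\N(A){\bf 1}(\Kh_m=A)$ and that only values $A\subseteq L$ are compatible with $\{\Kh_m\subseteq L\}$, it suffices to show $\{\Kh_m\subseteq A\}\in\F_A$ for $A\in\mathcal A_m$; then $\{\Kh_m=A\}\in\F_A$ by inclusion–exclusion over $\mathcal A_m$, while $\{\N(A)\le j\}\in\F_A$, both of which sit inside $\F_L$ when $A\subseteq L$. The delicate point here — and the step I expect to be the main obstacle — is that the closed cubes overlap on their faces, so $\{\Kh_m\subseteq A\}$ is not literally $\{\Kh\subseteq A\}$. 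The remedy is to note that $\Kh_m\subseteq A$ if and only if $\Kh$ avoids the compact set $R_A=\mathbf E\cap\bigcup\{Q\in\mathcal Q_m:Q\cap\mathbf E\not\subseteq A\}$, i.e. $\Kh\subseteq U_A:=\mathbf E\setminus R_A$, that $U_A$ is (relatively) open with $U_A\subseteq A$ (every point of $\mathbf E$ lies in some $Q\in\mathcal Q_m$, and if it lies in $U_A$ that cube must satisfy $Q\cap\mathbf E\subseteq A$), and hence $\{\Kh_m\subseteq A\}=\bigcup_{k}\{\Kh\subseteq L_k\}$ for compacts $L_k\uparrow U_A$ with $L_k\subseteq A$, which belongs to $\F_A$ since $\Kh$ is a stopping set and $\F_{L_k}\subseteq\F_A$.

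Finally I would transfer the conclusion to the prescribed $K$ by enlarging it. Set $K_l=\{x\in\mathbf E:\mathrm{dist}(x,K)\le 1/l\}$, so that $K_l\downarrow K$, $K\subseteq K_l^{\circ}$, and $\F_K=\bigcap_l\F_{K_l}$ by continuity from above. On $\{\Kh\subseteq K\}$ the sets $\Kh_m\cap(K_l^{\circ})^{c}$ are nested, compact and have empty intersection, so $\Kh_m\subseteq K_l$ for all large $m$; conversely $\Kh_m\subseteq K_l$ forces $\Kh\subseteq K_l$. Combining these with $\N(\Kh_m)\downarrow\N(\Kh)$ and $\bigcap_lK_l=K$ should give the exact identity
\[
\{\N(\Kh)\le j\}\cap\{\Kh\subseteq K\}=\bigcap_{l\in\mathbb N}\bigcup_{m\in\mathbb N}\Big(\{\N(\Kh_m)\le j\}\cap\{\Kh_m\subseteq K_l\}\Big)\,,
\]
whose right-hand side lies in $\bigcap_l\F_{K_l}=\F_K$ by the previous step (with $L=K_l$). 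Since $j$ and $K$ were arbitrary, this shows that $\N(\Kh)$ is $\F_\Kh$-measurable.
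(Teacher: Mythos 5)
Your proof is correct, but it takes a genuinely different route from the paper's. The paper disposes of the lemma in two lines by transplanting the progressive-measurability argument for time-indexed processes (Prop.~2.18 in Karatzas--Shreve) to the partial order $\subseteq$: right-continuity of $K\mapsto\N(K)$ along decreasing sequences yields progressive measurability of the set-indexed process, and composing with the stopping set then gives $\F_{\Kh}$-measurability of $\N(\Kh)$. You instead adapt the other classical proof for stopping times: you discretise the stopping set itself, approximating $\Kh$ from outside by the finitely-valued sets $\Kh_m$ built from dyadic cubes, verify the claim by hand in the finitely-valued case, and pass to the limit using continuity from above of $(\F_K)$ and downward continuity of the a.s.\ finite measure $\N$. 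Your treatment of the overlap of the closed cubes on their faces --- rewriting $\{\Kh_m\subseteq A\}$ as $\{\Kh\subseteq U_A\}$ for the relatively open set $U_A$ and then exhausting $U_A$ by compacts $L_k\subseteq A$ so that the stopping-set property can be invoked --- is exactly where a naive discretisation would fail, and you handle it correctly; the final set identity and both inclusions check out. What your approach buys is a fully self-contained, elementary argument that uses only the definition of a stopping set together with monotonicity and continuity from above of the filtration; what it costs is length. The paper's route is shorter but leans on an unstated extension of the notion of progressive measurability to set-indexed processes.
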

\begin{proof}
The proof is just a generalisation of the analogous statement for time-indexed stochastic processes, see
e.g. Proposition 2.18 in \cite{karatzas2012brownian}. For this, the notions are extended to the partial order $\subseteq$ and
then the right-continuity of $(\N(K), K\in\K)$ (with respect to inclusion) implies its progressive measurability and thus in turn
the measurability of \(\N({\Kh})\).
\end{proof}

Next, observe that the set-indexed process \((\N(K), K \in \K)\) has independent increments, i.e. for
\(K_1,\ldots,K_m \in \K\) with \(K_i \subseteq K_{i+1}\), $i=1,\ldots,m-1$, the random variables \(\N(K_{i+1}) - \N(K_i)=\N(K_{i+1}\setminus K_i)\)are independent (by the independence of the PPP on disjoint sets).
In fact, we show in Proposition~\ref{psm} that the process \(\N\) is even a strong Markov process. In addition,
Proposition~\ref{psm} yields \eqref{NccCs} using
that the closed complement \(\hat\Kh=\bar{{\hat{C}}^c}\) of the convex hull is a stopping set.

\begin{proposition}
\label{psm}
The set-indexed process \((\N(K), K\in\K)\) is strong Markov  at every stopping set \(\Kh\). More precisely,
conditionally on \(\F_{\Kh}\) the process \((\N(K\setminus \Kh),K\in\K)\) is  a Poisson point process with intensity $\lambda$ on $\Kh^c$.
In particular, \(\N(K\setminus \Kh ) \cond \F_{\Kh}  \sim \text{Poiss}(\lambda|K\setminus \Kh|)\) holds for all  \(K \in \K\).
\end{proposition}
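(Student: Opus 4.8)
The plan is to transfer the classical proof of the strong Markov property for (time-indexed) Lévy or Feller processes, cf.\ \cite{karatzas2012brownian}, to the present set-indexed setting: first establish the \emph{ordinary} Markov property at deterministic compact sets, then approximate the stopping set $\Kh$ from above by stopping sets $\Kh_n$ with countable range, apply the ordinary property on each level set, and finally pass to the limit $n\to\infty$. For Step~1, fix a deterministic $M\in\K$. The independence of $\N$ on disjoint sets together with the Restriction Theorem (\cite{kingman1992poisson}) shows that $\N(\cdot\cap M^{c})$ is a PPP of intensity $\lambda$ on $M^{c}$ and is independent of $\F_M=\sigma(\N_U:U\subseteq M)$; equivalently, for bounded measurable $g$ and any $K\in\K$, $\E[g(\N(K\setminus M))\cond\F_M]=\Phi_{\lambda|K\setminus M|}(g)$, where $\Phi_\mu(g)\eqdef\E[g(Z)]$ for $Z\sim\text{Poiss}(\mu)$, and likewise jointly for finitely many disjoint increments and for the Laplace functional $f\mapsto\N[f\,{\bf 1}_{M^{c}}]$, $f\in C_c^{+}(\mathbf{E})$.

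For Step~2, let $G_n$ be the nested dyadic grids of $\mathbf{E}$ of mesh $2^{-n}$ and let $\Kh_n\eqdef f_n(\Kh)$ be the union of the closed cubes of $G_n$ meeting $\Kh$, as in the proof of Lemma~\ref{LemMeas}. Because the grids are nested, $(\Kh_n)$ is a decreasing sequence of random compact sets with $\Kh\subseteq\Kh_n$ and $d_H(\Kh_n,\Kh)\le\sqrt{d}\,2^{-n}$, hence $\bigcap_n\Kh_n=\Kh$ (using that $\Kh$ is closed) and $\abs{K\setminus\Kh_n}\uparrow\abs{K\setminus\Kh}$. Each $\Kh_n$ takes values in a fixed countable family of compact sets, and $\{\Kh_n\subseteq K\}$ is — up to the usual care with cube boundaries — of the form $\{\Kh\subseteq M\}$ with $M$ a finite union of dyadic cubes contained in $K$, so that $\{\Kh_n\subseteq K\}\in\F_M\subseteq\F_K$ and $\Kh_n$ is a stopping set, the measurability bookkeeping being that indicated for Lemma~\ref{LemMeas}. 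Finally $\Kh\subseteq\Kh_n$ gives $\F_\Kh\subseteq\F_{\Kh_n}$: if $A\in\F_\Kh$ then $A\cap\{\Kh_n\subseteq K\}=(A\cap\{\Kh\subseteq K\})\cap\{\Kh_n\subseteq K\}\in\F_K$, both factors lying in $\F_K$.

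For Step~3, let $\{L_j\}$ enumerate the values of $\Kh_n$. For $A\in\F_{\Kh_n}$ one has $A\cap\{\Kh_n=L_j\}\in\F_{L_j}$ (standard for stopping sets of countable range), and on $\{\Kh_n=L_j\}$ the increment $\N(K\setminus\Kh_n)$ coincides with $\N(K\setminus L_j)$; Step~1 therefore yields $\E[{\bf 1}_{A\cap\{\Kh_n=L_j\}}\,g(\N(K\setminus\Kh_n))]=\E[{\bf 1}_{A\cap\{\Kh_n=L_j\}}\,\Phi_{\lambda|K\setminus\Kh_n|}(g)]$. Summing over $j$ and using that $\Phi_{\lambda|K\setminus\Kh_n|}(g)$ is $\F_{\Kh_n}$-measurable gives $\E[g(\N(K\setminus\Kh_n))\cond\F_{\Kh_n}]=\Phi_{\lambda|K\setminus\Kh_n|}(g)$, and the analogous identities for vectors of disjoint increments and for Laplace functionals show that, conditionally on $\F_{\Kh_n}$, $(\N(K\setminus\Kh_n))_{K\in\K}$ is a PPP of intensity $\lambda$ on $\Kh_n^{c}$. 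Now fix $A\in\F_\Kh\subseteq\F_{\Kh_n}$. Letting $n\to\infty$, $\N(K\setminus\Kh_n)\uparrow\N(K\setminus\Kh)$ by monotone continuity of the measure $\N$, $\abs{K\setminus\Kh_n}\to\abs{K\setminus\Kh}$, and $\mu\mapsto\Phi_\mu(g)$ is continuous; bounded convergence in $\E[{\bf 1}_A g(\N(K\setminus\Kh_n))]=\E[{\bf 1}_A\Phi_{\lambda|K\setminus\Kh_n|}(g)]$ gives $\E[{\bf 1}_A g(\N(K\setminus\Kh))]=\E[{\bf 1}_A\Phi_{\lambda|K\setminus\Kh|}(g)]$ for every $A\in\F_\Kh$, and since the right-hand integrand is $\F_\Kh$-measurable we conclude $\N(K\setminus\Kh)\cond\F_\Kh\sim\text{Poiss}(\lambda|K\setminus\Kh|)$; the vector/Laplace-functional version then gives that $(\N(K\setminus\Kh))_{K}$ is, conditionally on $\F_\Kh$, a PPP of intensity $\lambda$ on $\Kh^{c}$.

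The main obstacle is not the probabilistic content — Step~1 is essentially a restatement of the independence axiom of the PPP — but the stopping-set bookkeeping in Steps~2–3: verifying that the outer dyadic hulls $\Kh_n$ are genuine stopping sets with $\F_\Kh\subseteq\F_{\Kh_n}$ and handling their countable range, i.e.\ the set-indexed analogue of discretising a stopping time from above. A useful simplification to keep in mind is that the limiting argument uses only the inclusion $\F_\Kh\subseteq\F_{\Kh_n}$, not the more delicate reverse identity $\F_\Kh=\bigcap_n\F_{\Kh_n}$, so continuity from above of the filtration is not actually needed here.
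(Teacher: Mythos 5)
Your argument is correct, but it proves the proposition by a genuinely different route than the paper. The paper's proof is a martingale argument: it observes that \(M_K=\N(K)-\lambda\abs{K}\) is a set-indexed martingale, invokes the optional sampling theorem for stopping sets (Zuyev) to show that \(\N(K\setminus\Kh)-\lambda\abs{K\setminus\Kh}\) is again a martingale for the filtration \(\F_{K\cup\Kh}\), and then identifies the conditional law via the set-indexed Watanabe/martingale characterisation of the Poisson process (Ivanoff). Your proof instead transplants the classical discretise-from-above argument for the strong Markov property: the ordinary Markov property at deterministic \(M\in\K\) is just the independence axiom of the PPP, and the dyadic outer hulls \(\Kh_n\downarrow\Kh\) with countable range reduce the general case to that one. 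The trade-off is clear: the paper's route is short but outsources the two nontrivial ingredients (optional sampling for set-indexed martingales and the Watanabe-type characterisation) to the literature, whereas yours is elementary and self-contained -- only countable additivity of \(\N\) and bounded convergence are needed in the limit -- at the price of the stopping-set bookkeeping you flag in Steps 2--3. That bookkeeping does go through: \(\{\Kh_n\subseteq K\}=\{\Kh_n\subseteq M'\}\) with \(M'\) the union of dyadic cubes contained in \(K\), and this event equals \(\bigcup_m\{\Kh\subseteq L_m\}\) for the compact sets \(L_m=\{x:\operatorname{dist}(x,W)\ge 1/m\}\subseteq M'\), \(W\) the union of the remaining cubes, so it lies in \(\F_{M'}\subseteq\F_K\); your observation that only \(\F_{\Kh}\subseteq\F_{\Kh_n}\) (and not continuity from above of the filtration) is needed is also a genuine simplification over the time-indexed template. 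One could even argue your version gives slightly more explicit information, since it produces the conditional finite-dimensional distributions and Laplace functional directly rather than through the characterisation theorem.
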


\begin{remark}
The fact that  the increments  \(\N(\Kh \cup K) - \N(\Kh)\) are independent of \(\F_{\Kh}\)
 can be derived from a general theorem about the strong Markov property for random fields
in Thm. 4 in \cite{rozanov1982markov}.
See also \cite{zuyev2006strong}
for a discussion of the strong Markov property and its applications in stochastic geometry.
These statements, however, do not provide
a distributional characterisation of the increments of the process.
\end{remark}

\begin{proof}
A set-indexed, \((\F_K)\)-adapted integrable process
\((X_K, K \in \K)\) is called a martingale if  \(\E[X_B| \F_A] = X_A\) holds for any \(A, B \in \K\) with \(A \subseteq B\).
By the independence of increments, the process \(M_K \eqdef \N(K) - \lambda|K|\), \(K\in\K\), is clearly a martingale with respect to its natural filtration \((\F_K,K\in\K)\).
Then also the process
\[ \tilde{M}_K \eqdef M_{K\cup \Kh}-M_{\Kh}=\N(K\setminus\Kh)-\lambda\abs{K\setminus\Kh}\]
is a martingale with respect to the filtration \(\tilde{\F}_{K} \eqdef \F_K \vee \F_{\Kh}  = \F_{K\cup\Kh}\) because
 for  \(K_1, K_2 \in \K\) with \(K_1 \subseteq K_2\) the optional sampling theorem (see e.g. \cite{zuyev1999stopping}) yields
\[ \E[\tilde{M}_{K_2}| \tilde{\F}_{K_1}] = \E[M_{K_2\cup \Kh}-M_{\Kh}| \F_{K_1\cup \Kh}] = M_{K_1\cup \Kh}-M_{\Kh}=\tilde{M}_{K_1},\]
noting that $K_1\cup\Kh$ is again a stopping set.

This implies that \(\lambda|K\setminus\Kh|\), conditionally on \(\Kh\), is the deterministic compensator of the process \(\tilde{N}_K  = \N(K\setminus \Kh)\).
Then, due to the martingale characterisation  of the set-indexed Poisson process,
see Thm. 3.1 in \cite{ivanoff1994martingale} (analogue of  Watanabe's characterisation for the Poisson process),
the process \(\tilde{N}_K\), conditionally on \(\F_{\Kh}\), is a Poisson point process with mean measure \(\tilde\mu(A)= \lambda|A \cap \Kh^c| \).
\end{proof}

The last statement of Theorem~\ref{LemMeasCond},
 that  \(\F_{\hat\Kh} = \sigma(\Ch) \)
 is shown next. It can be seen as a generalisation of the
 interesting fact that
for a time-indexed Poisson process the sigma-algebra \(\sigma(\tau)\) associated
with the first jump time \(\tau\) coincides with the sigma-algebra
of \(\tau\)-history \(\F_\tau\).

\begin{lemma} The sigma-algebra \(\sigma(\Ch)\)
coincides with  the sigma-algebra \( \F_{\hat\Kh}\) of \(\hat\Kh\)-history, i.e. \(\sigma(\Ch) = \F_{\hat\Kh}\).
\end{lemma}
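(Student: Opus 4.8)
\emph{Proof plan.} Throughout, both $\sigma$-algebras are understood after completion with the $\P$-null sets, as in the statement of Theorem~\ref{LemMeasCond}. The plan is to prove the two inclusions $\sigma(\Ch)\subseteq\F_{\hat\Kh}$ and $\F_{\hat\Kh}\subseteq\sigma(\Ch)$ separately, the second one being the substantial part.

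For $\sigma(\Ch)\subseteq\F_{\hat\Kh}$, recall from Proposition~\ref{PropSuffCompl} that the brackets $[B]=\{A\in\C:A\subseteq B\}$, $B\in\C$, generate $\B_{\C}$, so that $\sigma(\Ch)$ is generated by the events $\{\Ch\subseteq B\}$, $B\in\C$. Fix $B\in\C$ and $K\in\K$. Since $\hat\Kh$ is an $(\F_K)$-stopping set we have $\{\hat\Kh\subseteq K\}\in\F_K$, and, as was shown in the proof that $\hat\Kh$ is a stopping set, on this event $\Ch=\text{conv}(\{X_1,\ldots,X_N\}\cap K)$, which is $\F_K$-measurable. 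Hence
\[
\{\Ch\subseteq B\}\cap\{\hat\Kh\subseteq K\}=\{\text{conv}(\{X_1,\ldots,X_N\}\cap K)\subseteq B\}\cap\{\hat\Kh\subseteq K\}\in\F_K,
\]
which gives $\{\Ch\subseteq B\}\in\F_{\hat\Kh}$ and thus the inclusion.

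For the reverse inclusion I would exploit the strong Markov structure of Proposition~\ref{psm}. Write $\N=\N_{\hat\Kh}+\N_\circ$ with $\N_{\hat\Kh}=\N(\cdot\cap\hat\Kh)$ and $\N_\circ=\N(\cdot\cap\hat\Kh^c)$, and use the topological identity $\hat\Kh^c=(\overline{\Ch^c})^{c}=\Ch^{\circ}$: thus $\N_\circ$ is the restriction of the process to the interior of its own convex hull, while $\N_{\hat\Kh}$ is carried by $\partial\Ch$, no point of $\N$ lying strictly outside $\Ch$. Almost surely the points $X_1,\ldots,X_N$ are in general position and pairwise distinct, and then any $X_i$ on $\partial\Ch$ is an extreme point of $\Ch$ --- otherwise it would lie on a common hyperplane with $d$ vertices spanning a facet. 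Consequently $\N_{\hat\Kh}=\sum_{v\in\text{ext}(\Ch)}\delta_v$ a.s., a measurable functional of $\Ch$, so that $\N=\Phi(\Ch,\N_\circ)$ a.s.\ for some measurable $\Phi$. By Proposition~\ref{psm} applied to $\hat\Kh$, the conditional distribution of $\N_\circ$ given $\F_{\hat\Kh}$ is that of a Poisson point process of intensity $\lambda$ on $\hat\Kh^c=\Ch^{\circ}$, hence a regular conditional distribution $Q(\Ch,\cdot)$ that is a probability kernel depending measurably on $\Ch$ alone. For $A\in\F_{\hat\Kh}$ write $\Ind_A=\varphi(\N)=\varphi(\Phi(\Ch,\N_\circ))$ with $\varphi$ measurable (possible since $\mathfrak{F}=\sigma(\N)$). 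As $\Ch$ is $\F_{\hat\Kh}$-measurable by the first inclusion, the disintegration formula for conditional expectations yields
\[
\Ind_A=\E\bigl[\Ind_A\mid\F_{\hat\Kh}\bigr]=\int\varphi\bigl(\Phi(\Ch,m)\bigr)\,Q(\Ch,dm)\qquad\P\text{-a.s.},
\]
and the right-hand side is $\sigma(\Ch)$-measurable, so $A\in\sigma(\Ch)$.

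The main obstacle is this second inclusion, and within it two points require care: turning Proposition~\ref{psm} into the assertion that the conditional law of $\N_\circ$ given $\F_{\hat\Kh}$ is a genuine measurable kernel in $\Ch$ (so that the disintegration step is legitimate), and the almost-sure identity $\N_{\hat\Kh}=\sum_{v\in\text{ext}(\Ch)}\delta_v$, where the general-position null set and the passage to completions enter. This argument is the point-process analogue of the classical fact that, for the natural filtration of a Poisson process, the $\sigma$-algebra of the first-jump-time history coincides with $\sigma(\tau)$.
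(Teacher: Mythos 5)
Your proof is correct, but the substantial inclusion $\F_{\hat\Kh}\subseteq\sigma(\Ch)$ is argued by a genuinely different route than the paper's. The paper first invokes a theorem of Shiryaev (Thm.~6, Ch.~1 of the optimal stopping monograph) to identify $\F_{\hat\Kh}$ with the $\sigma$-algebra generated by the stopped process $(\N(\hat\Kh\cap K),K\in\K)$, after checking the required condition that any realisation restricted to $K$ is again a realisation; it then observes that $\N(\hat\Kh\cap K)=\N((\partial\Ch)\cap K)$ counts, modulo null sets, the vertices of $\Ch$ in $K$ and is therefore $\sigma(\Ch)$-measurable. You instead bypass the identification of $\F_{\hat\Kh}$ with the stopped process entirely: you decompose $\N=\N_{\hat\Kh}+\N_\circ$, recover $\N_{\hat\Kh}$ a.s.\ as the vertex measure of $\Ch$, and use the strong Markov property of Proposition~\ref{psm} to disintegrate any $A\in\F_{\hat\Kh}$ as $\Ind_A=\E[\Ind_A\mid\F_{\hat\Kh}]=\int\varphi(\Phi(\Ch,m))\,Q(\Ch,dm)$, a $\sigma(\Ch)$-measurable quantity. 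What each approach buys: the paper's is shorter but outsources the key structural step to an external citation whose hypothesis must be verified; yours is more self-contained, relying only on Proposition~\ref{psm} (already proved in the paper), at the price of having to check that the conditional law of $\N_\circ$ is a measurable kernel in $\Ch$ and that the extreme-point map $C\mapsto\sum_{v\in\mathrm{ext}(C)}\delta_v$ is measurable on polytopes --- both of which you correctly flag, and the second of which the paper itself uses implicitly without proof when it asserts that the vertex count is $\sigma(\hat C)$-measurable. Both arguments ultimately hinge on the same geometric fact, namely that a.s.\ the points on $\partial\Ch$ are exactly the vertices of $\Ch$. Your first inclusion is also a more hands-on verification (checking $\{\Ch\subseteq B\}\cap\{\hat\Kh\subseteq K\}\in\F_K$ directly) where the paper simply cites the $\F_{\hat\Kh}$-measurability of the stopping set itself; both are fine.
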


\begin{proof}
Since $\hat\Kh$ is $\F_{\hat\Kh}$-measurable by Lemma~1 in \cite{zuyev1999stopping} and $\Ch=\overline{\hat\Kh^c}$, it is evident that \(\sigma(\Ch) \subseteq \F_{\hat\Kh}\). The other direction is  more involved.
We use that the sigma-algebra \( \F_{\hat\Kh}\)  coincides with the sigma-algebra
\(\sigma(\{\N(\hat\Kh\cap K), K \in \K\})\) generated by the  process stopped at $\hat\Kh$. This statement
can be derived from Thm. 6, Ch. 1 in \cite{shiryaev2007optimal}. Note that their assumption (1.11)  is satisfied in our case,
because for all \(K \in \K\) and \(\omega \in \Omega \) there is \(\omega^\prime\) such that
 \(\N(U\cap K,\omega) = \N(U,\omega^\prime) \) for all \(U \in \K\), which simply says that observing points in \(K\in \K\)
 there might be no points outside \(K\).
Finally, observe that by definition of the convex hull $\N(\bar{\Ch^c}\cap K)=\N((\partial\Ch)\cap K)$. 
 Modulo null sets, $\N((\partial\Ch)\cap K)$ counts the number of vertices of $\hat C$ in $K$ and is thus $\sigma(\hat C)$-measurable.
\end{proof}

\begin{proof}[Proof of Lemma~\ref{lemma_bias_plug_in}]
Using that the bias of the oracle estimator \(\hat\theta =  | \Ch | +  N_{\partial}/ (N_\circ + 1) | \Ch | \) is exponentially small, it remains
to compare its expectation with the expectation of the plug-in estimator \(\hat\theta_{plugin}\) to show \eqref{statement_bias_plug_in}:
\begin{EQA}
\E [\hat\theta  - \hat\theta_{plugin}] & =   & \E \biggl[ |\Ch| (\frac{N_{\partial}}{N_\circ + 1} - \frac{N_{\partial}}{N}) \biggr]
								=\E \biggl[|\Ch| \frac{N_{\partial}^2 - N_{\partial}}{(N_\circ + 1) (N_\circ + N_{\partial})}  \biggr] \\
						       &\ge & \frac {d}{d+1}\E \biggl[ \frac{|\Ch|N_{\partial}^2 }{(N_\circ + 1)2\lambda\abs{C}}{\bf 1}(N\le 2\lambda\abs{C})  \biggr]	 \,,
\label{Etvtvpl}
\end{EQA}
where in the last line we have used $\abs{\Ch}>0$ only if $N_{\partial}\ge d+1$ and in this case $N_{\partial}^2-N_{\partial}\ge \frac{d}{d+1}N_{\partial}^2$.
Using $\E[(N_\circ+1)^{-1}\,|\,\hat C]=\lambda_\circ^{-1}(1-\ex^{-\lambda_\circ})$ from above, we obtain after writing ${\bf 1}(N\le 2\lambda\abs{C})=1-{\bf 1}(N> 2\lambda\abs{C})$
\begin{align*}
\E [\hat\theta  - \hat\theta_{plugin}] &\ge \frac {d}{d+1}\Big(\E \biggl[ \frac{N_{\partial}^2 |\Ch|(1-\ex^{-\lambda_\circ})}{2\lambda_\circ \lambda\abs{C}}\biggr]
-\E \biggl[ \frac{N_{\partial}^2 |\Ch|}{2\lambda\abs{C}}{\bf 1}(N> 2\lambda\abs{C})  \biggr]\Big)\\
&\ge \frac {d}{d+1}\Big(\frac{\E[N_{\partial}^2(1-\ex^{-\lambda_\circ})]}{2\lambda^2\abs{C}}
-\frac{\E \bigl[ N^2{\bf 1}(N> 2\lambda\abs{C})  \bigr]}{2\lambda}\Big).
\end{align*}
By Cauchy-Schwarz inequality and large deviations similarly to \eqref{lexlC}, the first term is bounded from below by a constant multiple of $\E[\abs{C\setminus\Ch}]^2/\abs{C}$ in view of \(\E[N_{\partial}^2] \ge \lambda^2 \E[\abs{C\setminus\Ch}]^2 \),  see e.g. Section~2.3.2 in \cite{brunel:tel-01066977}. Because of $N\sim\Poiss(\lambda\abs{C})$, the second term is of order $\lambda\abs{C}^2\ex^{-\lambda\abs{C}}$ and thus asymptotically of much smaller order.
\end{proof}







\bibliographystyle{economet}



\bibliography{ref}

\begin{figure}[htbp]
  \includegraphics[width=\linewidth]{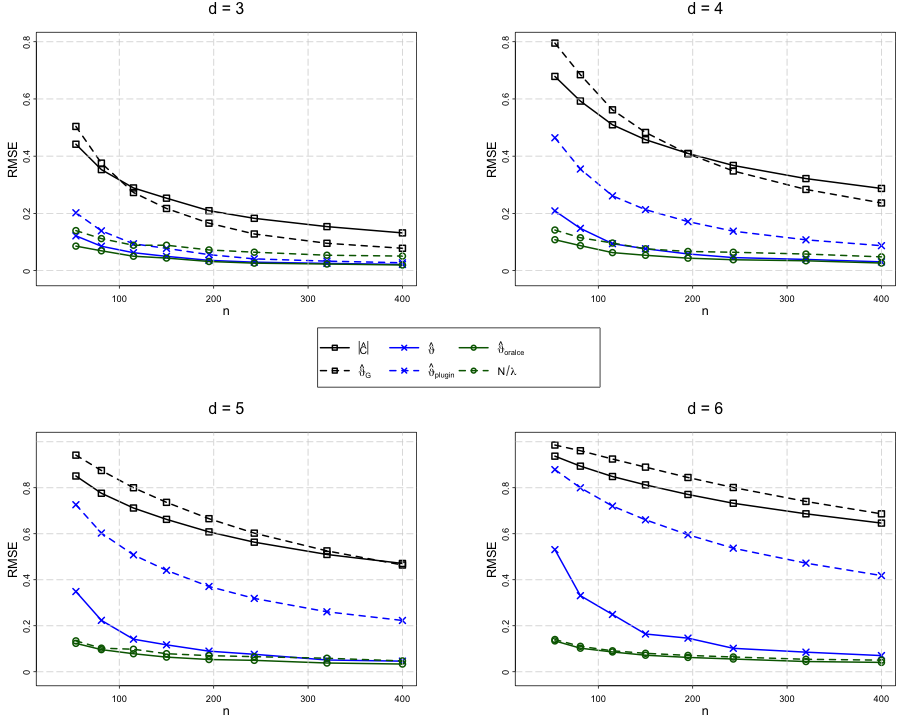}
  \caption[]{Monte Carlo RMSE estimates for the studied estimators for the volume of the unit cube \(C = [0,1]^d\) in dimensions  \(d = 3,4,5,6\) .}
\protect\label{d3456}
\end{figure}

\end{document}